\newtheorem{thm}{Theorem}[section]
\newtheorem{cor}[thm]{Corollary}
\newtheorem{lem}[thm]{Lemma}
\newtheorem{prop}[thm]{Proposition}
\theoremstyle{definition}
\newtheorem{defn}[thm]{Definition}
\newtheorem{example}[thm]{Example}
\theoremstyle{remark}
\newtheorem{rem}[thm]{Remark}
\numberwithin{equation}{section}
\begin{document}
\title[${\mathcal F}$-Hypercyclic operators on Fr\' echet spaces]{${\mathcal F}$-Hypercyclic operators on Fr\' echet spaces}

\author{Marko Kosti\' c}
\address{Faculty of Technical Sciences,
University of Novi Sad,
Trg D. Obradovi\' ca 6, 21125 Novi Sad, Serbia}
\email{marco.s@verat.net}

{\renewcommand{\thefootnote}{} \footnote{2010 {\it Mathematics
Subject Classification.} 47A16, 47B37, 47D06.
\\ \text{  }  \ \    {\it Key words and phrases.} ${\mathcal F}$-hypercyclicity, $(m_{n})$-hypercyclicity, $q$-frequent hypercyclicity.
\\  \text{  }  \ \ The author is partially supported by grant 174024 of Ministry
of Science and Technological Development, Republic of Serbia.}}

\begin{abstract}
In this paper, we investigate ${\mathcal F}$-hypercyclicity of linear, not necessarily continuous, operators on Fr\' echet spaces. The  notion of
lower $(m_{n})$-hypercyclicity seems to be new and not considered elsewhere even for linear continuous operators acting on Fr\' echet spaces. We pay special attention 
to the study of $q$-frequent hypercyclicity, where $q\geq 1$ is an arbitrary real number. We present several new concepts and results for lower and upper densities in a separate section, providing also a great number of illustrative examples and open problems.
\end{abstract}

\maketitle

\section{Introduction and preliminaries}

Let $X$ be a Fr\' echet space.
A linear operator $T$ on $X$ is said to be hypercyclic
iff there exists an element $x\in D_{\infty}(T)$ whose orbit $\{T^nx:n\in\mathbb{N}_0\}$ is dense in $X$;
$T$ is said to be topologically transitive, resp. topologically mixing,
iff for every pair of open non-empty subsets $U,\,V$ of $X$,
there exists $n\in\mathbb{N}$ such that $T^n(U)\cap V\neq\emptyset$,
resp. iff for every pair of open non-empty subsets $U,\,V$ of $X$, there exists $n_0\in\mathbb{N}$
such that, for every $n\in\mathbb{N}$ with $n\geq n_{0}$, we have $T^n(U)\cap V\neq\emptyset$. By a chaotic operator, we mean any linear hypercyclic operator $T$ for which 
the set of periodic points of $T,$ defined by $\{x\in D_{\infty}(T) : (\exists n_{0}\in {\mathbb N}) \,  T^{n_{0}}x=x\},$ is dense in $X$.
For more details about linear topological dynamics, we refer the reader to \cite{Bay} and \cite{Grosse}.

The notion of an $(m_{n})$-hypercyclic operator on a separable Fr\' echet space was introduced by Bayart and Matheron \cite{Bayart} in 2009 with a view to control the frequency of the individual orbits of a hypercyclic, non-weakly mixing operator. 
Gupta, Mundayadan \cite{gupta}-\cite{gupta-prim} and Heo, Kim-Kim \cite{heo} have been recently considered the special case of 
$(m_{n})$-hypercyclicity, the so-called $q$-frequent hypercyclicity, where the sequence $(m_{n})$ is given by $m_{n}:=n^{q}$ ($q\in {\mathbb N}$). In the case that $q=1,$ the $q$-frequent hypercyclicity
is also known as frequent hypercyclicity and, without any doubt, that is the best explored concept of above-mentioned. 

Suppose that ${\mathcal F}$ is a non-empty collection of certain subsets of ${\mathbb N},$ i.e., ${\mathcal F}\in P(P({\mathbb N}))$ and ${\mathcal F}\neq \emptyset.$ The main aim of this paper is to investigate ${\mathcal F}$-hypercyclic properties of linear operators on Fr\' echet spaces. The general case is a very non-trivial and, because of that, we assume that the family ${\mathcal F}$ takes some special forms. In Section 2, we introduce and further analyze the notions of lower and upper $(m_{n})$-densities,
$l;(m_{n})$-Banach densities and
$u;(m_{n})$-Banach densities (the results established in this section can be viewed of some independent interest). Strictly speaking, in this paper, we analyze the case in which ${\mathcal F}$ is a collection of all non-empty subsets of ${\mathbb N}$ having the positive lower or upper $(m_{n})$-density,
$l;(m_{n})$-Banach density or
$u;(m_{n})$-Banach density. The notion of lower $(m_{n})$-hypercyclicity is introduced in this direction.
We focus our attention to the case that  $m_{n}=n^{q}$ for some real number $q\geq 1,$ and slightly improve several structural results from \cite{gupta}-\cite{heo} in this context
(see Section 3). We formulate l-$(m_{n})$-frequent universality criterion for unbounded linear operators and upper frequent hypercyclicity criterion for unbounded linear operators obeying the methods from the theory of $C$-regularized semigroups (see deLaubenfels, Emamirad, Grosse-Erdmann \cite{cycch} for pioneering results in this direction).

The author would like to thank Prof. M. Murillo-Arcila (Universitat Jaume I, Castell\' on, Spain) and Y. Puig de Dios (University of California, Riverside, USA) for many stimulating discussions during the genesis of paper.

We use the standard notation throughout the paper. For any $s\in {\mathbb R},$ we define $\lfloor s \rfloor :=\sup \{
l\in {\mathbb Z} : s\geq l \}$ and $\lceil s \rceil :=\inf \{ l\in
{\mathbb Z} : s\leq l \}.$ Unless stated otherwise, in this paper we assume that $X$ is a Fr\' echet space (real or complex) and
the topology of $X$ is induced by the fundamental system
$(p_{n})_{n\in {\mathbb N}}$ of increasing seminorms. If $Y$ is also a Fr\' echet space, over the same field of scalars as $X,$ then by $L(X,Y)$ we denote the space consisting of all continuous linear mappings from $X$ into $Y.$ The
translation invariant metric $d: X\times X \rightarrow [0,\infty),$ defined by
$$
d(x,y):=\sum
\limits_{n=1}^{\infty}\frac{1}{2^{n}}\frac{p_{n}(x-y)}{1+p_{n}(x-y)},\
x,\ y\in X,
$$
satisfies, among many other properties, the following:
$d(x+u,y+v)\leq d(x,y)+d(u,v)$ and $d(cx,cy)\leq (|c|+1)d(x,y),\
c\in {\mathbb K},\ x,\ y,\ u,\ v\in X.$ Set $L(x,\epsilon):=\{y\in X : d(x,y)<\epsilon\}.$ For a closed linear operator $T$ on $X,$ we denote by $D(T),$ $R(T),$ $N(T)$ and $\rho(T)$ its domain, range, kernel and resolvent set, respectively. Set $D_{\infty}(T):=\bigcap_{k\in {\mathbb N}}D(T^{k}).$ If $T^{k}$ is closed for any $k\in {\mathbb N},$ then the space
$D(T^{k}),$ equipped with the following family of seminorms $p_{k,n}(x):=p_{n}(x)+p_{n}(Tx)+\cdot \cdot \cdot +p_{n}(T^{k}x),$ $x\in D(T^{k}),$ is a Fr\'echet one. The projective limit of spaces $[D(T^{k})]$ as $k\rightarrow \infty$ is a Fr\'echet space, which will be denoted by $[D_{\infty}(T)].$ 
We will always assume henceforth that $C\in L(X)$ and $C$ is injective. Put $p_{C}(x):=p(C^{-1}x),$ $p\in \circledast,$ $x\in R(C).$ Then
$p_{C}(\cdot)$ is a seminorm on $R(C)$ and the calibration
$(p_{C})_{p\in \circledast}$ induces a Fr\'echet topology on
$R(C);$ we denote this space by $[R(C)]_{\circledast}.$

Let us recall that a series $\sum_{n=1}^{\infty}x_{n}$ in $X$ is called unconditionally convergent iff for every permutation $\sigma$ of ${\mathbb N}$,
the series $\sum_{n=1}^{\infty}x_{\sigma (n)}$  is convergent (see \cite{boni} and references cited therein for further information on the subject).
A collection of series
$\sum_{n=1}^{\infty}x_{n,k},$ $k\in J$ is called
unconditionally convergent, uniformly in $k\in J$ iff for any $\epsilon>0$ there exists
$N\in {\mathbb N}$ such that for any finite set $F\subseteq [N,\infty) \cap {\mathbb N}$ and for every $k\in J$ we have $\sum_{n\in F}x_{n,k} \in L(0,\epsilon).$ 

Let ${\mathcal F}$ be a non-empty collection of certain subsets of ${\mathbb N},$ i.e., ${\mathcal F}\in P(P({\mathbb N}))$ and ${\mathcal F}\neq \emptyset.$ Observe that, in contrast to \cite{puid-dios-erg}, we do not require here that $|A|=\infty$ for all $A\in{\mathcal F}$ as well as that ${\mathcal F}$ satisfies the following property:
\begin{itemize}
\item[(I)] $B\in{\mathcal F}$ whenever there exists $A\in{\mathcal F}$ such that $A\subseteq B.$
\end{itemize}
If ${\mathcal F}$ satisfies (I), then it is said that 
${\mathcal F}$ is a Furstenberg family; a proper Furstenberg family ${\mathcal F}$ is any Furstenberg family satisfying that $\emptyset \notin {\mathcal F}.$ For the sequel, we also need the notion of an upper Furstenberg family; that is any proper Furstenberg family ${\mathcal F}$ satisfying the following two conditions:
\begin{itemize}
\item[(II)] There exist a set $D$ and a countable set $M$ such that ${\mathcal F}=\bigcup_{\delta \in D} \bigcap_{\nu \in M}{\mathcal F}_{\delta,\nu},$ where for each $\delta \in D$ and $\nu \in M$ the following holds: If $A\in {\mathcal F}_{\delta,\nu},$ then there exists a
finite subset $F\subseteq {\mathbb N}$ such that the implication $A\cap F \subseteq B \Rightarrow B\in {\mathcal F}_{\delta,\nu}$ holds true.
\item[(III)] If $A\in {\mathcal F},$ then there exists $\delta \in D$ such that, for every $n\in {\mathbb N},$ we have $A-n\equiv \{k-n: k\in A,\ k>n\}\in {\mathcal F}_{\delta},$ where ${\mathcal F}_{\delta}\equiv \bigcap_{\nu \in M}{\mathcal F}_{\delta,\nu}.$
\end{itemize} 

We would like to propose the following definition:

\begin{defn}\label{4-skins-MLO-okay}
Let $(T_{n})_{n\in {\mathbb N}}$ be a sequence of linear operators acting between the spaces
$X$ and $Y,$ let $T$ be a linear operator on $X$, and let $x\in X$. Suppose that ${\mathcal F}\in P(P({\mathbb N}))$ and ${\mathcal F}\neq \emptyset.$ Then we say
that:
\begin{itemize}
\item[(i)] $x$ is an ${\mathcal F}$-hypercyclic element of the sequence
$(T_{n})_{n\in {\mathbb N}}$ iff
$x\in \bigcap_{n\in {\mathbb N}} D(T_{n})$ and for each open non-empty subset $V$ of $Y$ we have that 
$$
S(x,V):=\bigl\{ n\in {\mathbb N} : T_{n}x \in V \bigr\}\in {\mathcal F} ;
$$ 
$(T_{n})_{n\in {\mathbb N}}$ is said to be ${\mathcal F}$-hypercyclic iff there exists an ${\mathcal F}$-hypercyclic element of
$(T_{n})_{n\in {\mathbb N}}$;
\item[(ii)] $T$ is ${\mathcal F}$-hypercyclic iff the sequence
$(T^{n})_{n\in {\mathbb N}}$ is ${\mathcal F}$-hypercyclic; $x$ is said to be
an ${\mathcal F}$-hypercyclic element of $T$ iff $x$ is an ${\mathcal F}$-hypercyclic element of the sequence
$(T^{n})_{n\in {\mathbb N}}.$
\end{itemize}
\end{defn}

In the following definition, we extend the notion introduced by F. Bayart and \'E. Matheron \cite[Definition 1.1]{Bayart}, given for continuous linear operators on separable Fr\' echet spaces:

\begin{defn}\label{prost}
Let $(T_{j})_{j\in {\mathbb N}}$ be a sequence of linear operators between the spaces
$X$ and $Y,$ let $T$ be a linear operator on $X$, and let $x\in X$. Suppose that $(m_{n})$ is an increasing sequence in $[1,\infty).$ Then we say
that:
\begin{itemize}
\item[(i)] $x$ is an $(m_{n})$-hypercyclic element of the sequence
$(T_{j})_{j\in {\mathbb N}}$ iff
$x\in  D(T_{j}),$ $j\in {\mathbb N}$ and, for every
$n\in {\mathbb N}_{0}$ and for every open non-empty subset $V$ of $Y,$ we have that 
there exist a strictly increasing sequence $(l_{n})$ of positive integers and a finite constant $L>0$ such that
$l_{n}\leq Lm_{n},$ $n\in {\mathbb N}$ and $T_{l_{n}}x\in V;$
$(T_{j})_{j\in {\mathbb N}}$ is said to be $(m_{n})$-hypercyclic iff there exists an $(m_{n})$-hypercyclic element of
$(T_{j})_{j\in {\mathbb N}}$;
\item[(ii)] $x$ is said to be
an $(m_{n})$-hypercyclic element of $T$ iff $x$ is an $(m_{n})$-hypercyclic element of the sequence
$(T^{j})_{j\in {\mathbb N}}$; $T$ is $(m_{n})$-hypercyclic iff the sequence
$(T^{j})_{j\in {\mathbb N}}$ is $(m_{n})$-hypercyclic.
\end{itemize}
\end{defn}

\section{Lower and upper densities}\label{densities}

It is well known that the lower density of a non-empty subset $A\subseteq {\mathbb N},$ denoted by $\underline{d}(A),$ is defined through:
$$
\underline{d}(A):=\liminf_{n\rightarrow \infty}\frac{|A \cap [1,n]|}{n},
$$
as well as that the upper density of $A,$ denoted by $\overline{d}(A),$ is defined through:
$$
\overline{d}(A):=\limsup_{n\rightarrow \infty}\frac{|A \cap [1,n]|}{n}.
$$
Further on, the lower Banach density of $A,$ denoted by $\underline{Bd}(A),$ is defined through:
$$
\underline{Bd}(A):=\lim_{s\rightarrow +\infty}\liminf_{n\rightarrow \infty}\frac{|A \cap [n+1,n+s]|}{s},
$$
while the (upper) Banach density of $A,$ denoted by $\overline{Bd}(A),$ is defined through:
$$
\overline{Bd}(A):=\lim_{s\rightarrow +\infty}\limsup_{n\rightarrow \infty}\frac{|A \cap [n+1,n+s]|}{s}.
$$
It is well known that 
\begin{align}\label{jebu}
0\leq \underline{Bd}(A) \leq \underline{d}(A) \leq \overline{d}(A) \leq \overline{Bd}(A)\leq 1,
\end{align}
as well as that
${\mathcal F}:=\{ A\subseteq {\mathbb N} : \overline{d}(A)>0 \}$ (${\mathcal F}:=\{ A\subseteq {\mathbb N} : \overline{Bd}(A)>0 \}$) is an upper Furstenberg family; see e.g. \cite{grekos-prim}.
For more details about the above densities, we refer the reader to \cite{grekos}, \cite{hindu}, \cite{dens} and references cited therein. 

In this section, we introduce the following notions of lower and upper densities for a subset $A\subseteq {\mathbb N}:$ 

\begin{defn}\label{prckojed}
Let $q\in [1,\infty),$ and let $(m_{n})$ be an increasing sequence in $[1,\infty).$ Then we define:
\begin{itemize}
\item[(i)] The lower $q$-density of $A,$ denoted by $\underline{d}_{q}(A),$ is defined through:
$$
\underline{d}_{q}(A):=\liminf_{n\rightarrow \infty}\frac{|A \cap [1,n^{q}]|}{n}.
$$
\item[(ii)] The upper $q$-density of $A,$ denoted by $\overline{d}_{q}(A),$ is defined through:
$$
\overline{d}_{q}(A):=\limsup_{n\rightarrow \infty}\frac{|A \cap [1,n^{q}]|}{n}.
$$
\item[(iii)] The lower $(m_{n})$-density of $A,$ denoted by $\underline{d}_{m_{n}}(A),$ is defined through:
$$
\underline{d}_{{m_{n}}} (A):=\liminf_{n\rightarrow \infty}\frac{|A \cap [1,m_{n}]|}{n}.
$$
\item[(iv)] The upper $(m_{n})$-density of $A,$ denoted by $\overline{d}_{{m_{n}}}(A),$ is defined through:
$$
\overline{d}_{{m_{n}}}(A):=\limsup_{n\rightarrow \infty}\frac{|A \cap [1,m_{n}]|}{n}.
$$
\end{itemize}
\end{defn}

The notion in (i)-(ii) has been considered recently by Gupta and Mundayadan \cite{gupta} in the case that $q\in {\mathbb N},$ while this notion seems to be new provided that $q \in [1,\infty) \setminus {\mathbb N}.$ To the best knowledge of the author, the notion in (iii)-(iv)  has not been considered elsewhere.

The proof of following simple result is very similar to that of \cite[Proposition 3.2]{gupta}, given in the case that $q\in{\mathbb N},$ and therefore omitted:

\begin{prop}\label{perhane}
Suppose that  $q\geq 1,$ $A=\{ n_{1},\ n_{2}, \cdot \cdot \cdot,\  n_{k},\cdot \cdot \cdot \},$ where $(n_{k})$ is a strictly increasing sequence of positive integers. Then $\underline{d}_{q}(A)=\liminf_{k\rightarrow \infty}\frac{k}{n_{k}^{1/q}}$ and
$\underline{d}_{q}(A)>0$ iff there exists a finite constant $L>0$ such that $n_{k}\leq Lk^{q},$ $k\in {\mathbb N}.$ 
\end{prop}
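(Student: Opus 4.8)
The plan is to work throughout with the counting function $f(n):=|A\cap[1,n^{q}]|$ for $n\in{\mathbb N}$, so that $\underline{d}_{q}(A)=\liminf_{n\to\infty}f(n)/n$ by definition. The elementary remark driving everything is that, once $n$ is large enough that $f(n)\ge 1$, the equality $f(n)=k$ holds exactly when $n_{k}\le n^{q}<n_{k+1}$, i.e.\ when $n_{k}^{1/q}\le n<n_{k+1}^{1/q}$. Since $(n_{k})$ is an arbitrary strictly increasing integer sequence, $f$ may jump by more than one in a single step, so one must resist the temptation to argue via surjectivity of $f$; instead both inequalities below are arranged so that the exact value of $f(n)$ is irrelevant.

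First I would establish the identity $\underline{d}_{q}(A)=\ell$, where $\ell:=\liminf_{k\to\infty}k/n_{k}^{1/q}$, by a two-sided estimate. For $\underline{d}_{q}(A)\ge\ell$: whenever $f(n)=k\ge 1$, the bound $n<n_{k+1}^{1/q}$ gives
\[
\frac{f(n)}{n}=\frac{k}{n}>\frac{k}{n_{k+1}^{1/q}}=\frac{k}{k+1}\cdot\frac{k+1}{n_{k+1}^{1/q}}\ge\frac{k}{k+1}\,\inf_{j\ge k+1}\frac{j}{n_{j}^{1/q}},
\]
and as $n\to\infty$ we have $k=f(n)\to\infty$, so the right-hand side tends to $1\cdot\ell=\ell$; hence $\liminf_{n\to\infty}f(n)/n\ge\ell$. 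For the reverse inequality $\underline{d}_{q}(A)\le\ell$ (trivial when $\ell=\infty$): choose $k_{j}\to\infty$ with $k_{j}/n_{k_{j}}^{1/q}\to\ell$ and set $n^{(j)}:=\lceil n_{k_{j}}^{1/q}\rceil-1$, which tends to $\infty$; since $(n^{(j)})^{q}<n_{k_{j}}$ we get $f(n^{(j)})\le k_{j}-1$, so
\[
\frac{f(n^{(j)})}{n^{(j)}}\le\frac{k_{j}-1}{\lceil n_{k_{j}}^{1/q}\rceil-1}\le\frac{k_{j}-1}{n_{k_{j}}^{1/q}-1}\longrightarrow \ell\qquad(j\to\infty),
\]
which forces $\liminf_{n\to\infty}f(n)/n\le\ell$. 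Combining the two bounds proves $\underline{d}_{q}(A)=\liminf_{k\to\infty}k/n_{k}^{1/q}$.

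The second assertion then follows at once. By the identity just proved, $\underline{d}_{q}(A)>0$ is equivalent to $\ell>0$, i.e.\ to the existence of $c>0$ and $k_{0}\in{\mathbb N}$ with $k/n_{k}^{1/q}\ge c$, equivalently $n_{k}\le c^{-q}k^{q}$, for all $k\ge k_{0}$; taking $L:=\max\bigl(c^{-q},\ \max_{1\le k\le k_{0}}n_{k}k^{-q}\bigr)$ absorbs the finitely many remaining indices and yields $n_{k}\le Lk^{q}$ for every $k\in{\mathbb N}$. Conversely, $n_{k}\le Lk^{q}$ for all $k$ gives $k/n_{k}^{1/q}\ge L^{-1/q}$, hence $\underline{d}_{q}(A)\ge L^{-1/q}>0$.

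I expect the only genuine obstacle to be bookkeeping rather than ideas: one has to keep the $\liminf$/$\limsup$ passages, the index shift $k\mapsto k+1$, the ceiling functions and the factor $k/(k+1)\to 1$ consistently on the correct side of each inequality, and be honest about the fact that $f$ need not hit every positive integer. This is essentially the same mechanism as in \cite[Proposition 3.2]{gupta}; the only novelty is carrying it out with the real exponent $1/q$ in place of an integer one.
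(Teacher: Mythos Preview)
Your proof is correct and follows exactly the route the paper indicates: the paper omits the argument entirely, deferring to \cite[Proposition~3.2]{gupta} and noting only that the case $q\in[1,\infty)\setminus{\mathbb N}$ is handled identically. Your counting-function argument with the two-sided estimate via $n_{k}^{1/q}\le n<n_{k+1}^{1/q}$ and the index shift $k\mapsto k+1$ is precisely that mechanism, with the ceiling/floor bookkeeping needed for non-integer $q$ carried out carefully.
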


For the sequel, it would be worthwhile to observe that for each $q\geq 1$ we have:
\begin{align}\label{nejes}
[0,1]\ni \underline{d}(A) \leq \underline{d}_{q}(A) \in [0,\infty] \ \ \mbox{ and }\ \ [0,1]\ni \overline{d}(A) \leq \overline{d}_{q}(A) \in [0,\infty].
\end{align}

Further on, it is very simple to prove that 
${\mathcal F}:=\{ A\subseteq {\mathbb N} : \overline{d}_{{m_{n}}}(A)>0 \}$ is a (proper) Furstenberg family iff ${\mathbb N} \in {\mathcal F},$ i.e., if $\limsup_{n\rightarrow \infty}\frac{m_{n}}{n}>0.$ 
In this case, we have
${\mathcal F}=\bigcup_{\delta>0}\bigcap_{n\in {\mathbb N}}{\mathcal F}_{\delta,n},$ where ${\mathbb N} \supseteq A\in {\mathcal F}_{\delta,n}$ iff there exists $N\geq n$ such that $|A \cap [1,m_{N}]|/N >\delta.$ Hence, the condition (II) holds.
The condition (III) also holds because for each $k\in {\mathbb N}$ we have 
\begin{align*}
\frac{\bigl|A \cap [1,m_{N}]\bigr|-k}{N} \leq  \frac{\bigl|A_{k} \cap [1,m_{N}]\bigr|}{N} \leq \frac{\bigl|A \cap [1,m_{N}]\bigr|+k}{N}
\end{align*}
so that $
\overline{d}_{{m_{n}}}(A)=
\overline{d}_{{m_{n}}}(A_{k})$ ($
\underline{d}_{{m_{n}}}(A)=
\underline{d}_{{m_{n}}}(A_{k})$) and ${\mathcal F}$ is an upper Furstenberg family. Therefore, we have proved the following:

\begin{prop}\label{tasta}
Let $(m_{n})$ be an increasing sequence in $[1,\infty),$ and let $A\subseteq {\mathbb N}.$ Then ${\mathcal F}:=\{ A\subseteq {\mathbb N} : \overline{d}_{{m_{n}}}(A)>0 \}$ is a (proper, upper) Furstenberg family iff $\limsup_{n\rightarrow \infty}\frac{m_{n}}{n}>0.$ 
\end{prop}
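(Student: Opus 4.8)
The plan is to prove the equivalence in both directions and, in the nontrivial direction, to produce directly the witnesses demanded by conditions (II) and (III), so that the adjectives ``proper'' and ``upper'' are obtained at no extra cost. I would begin from two elementary facts. First, \emph{monotonicity}: if $A\subseteq B$ then $|A\cap[1,m_{n}]|\le|B\cap[1,m_{n}]|$ for every $n$, hence $\overline{d}_{m_{n}}(A)\le\overline{d}_{m_{n}}(B)$; in particular $\overline{d}_{m_{n}}(A)\le\overline{d}_{m_{n}}(\mathbb{N})$ for every $A\subseteq\mathbb{N}$. Second, since $\mathbb{N}$ consists of integers, $|\mathbb{N}\cap[1,m_{n}]|=\lfloor m_{n}\rfloor$, and because $0\le(m_{n}-\lfloor m_{n}\rfloor)/n\le 1/n\to 0$ one gets $\overline{d}_{m_{n}}(\mathbb{N})=\limsup_{n\to\infty}m_{n}/n$. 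These give the direction ``$\mathcal{F}$ Furstenberg $\Rightarrow\limsup_{n\to\infty}m_{n}/n>0$'': a Furstenberg family is non-empty, so $\mathcal{F}$ contains some $A\subseteq\mathbb{N}$; by (I) this forces $\mathbb{N}\in\mathcal{F}$, i.e. $\overline{d}_{m_{n}}(\mathbb{N})>0$, i.e. $\limsup_{n\to\infty}m_{n}/n>0$. (Contrapositively: if $\limsup_{n\to\infty}m_{n}/n=0$, then $\overline{d}_{m_{n}}(A)=0$ for every $A$, so $\mathcal{F}=\emptyset$, which is not a Furstenberg family.)

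For the converse, assume $\limsup_{n\to\infty}m_{n}/n>0$. Then $\mathbb{N}\in\mathcal{F}$, so $\mathcal{F}\neq\emptyset$; monotonicity gives property (I); and $\overline{d}_{m_{n}}(\emptyset)=0$ gives $\emptyset\notin\mathcal{F}$. So $\mathcal{F}$ is a proper Furstenberg family, and it remains to exhibit the structure in (II) and (III). I would take $D:=(0,\infty)$, $M:=\mathbb{N}$, and, for $\delta>0$ and $n\in\mathbb{N}$, declare $A\in\mathcal{F}_{\delta,n}$ iff there exists $N\ge n$ with $|A\cap[1,m_{N}]|/N>\delta$. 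Unwinding the definition of $\limsup$ gives $\{A:\overline{d}_{m_{n}}(A)>\delta\}\subseteq\bigcap_{n}\mathcal{F}_{\delta,n}\subseteq\{A:\overline{d}_{m_{n}}(A)\ge\delta\}$, and taking the union over $\delta\in(0,\infty)$ yields $\mathcal{F}=\bigcup_{\delta>0}\bigcap_{n}\mathcal{F}_{\delta,n}$.

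To verify (II), take $A\in\mathcal{F}_{\delta,n}$ with witness $N$ and set $F:=\{1,2,\dots,\lfloor m_{N}\rfloor\}$; since $A\cap F=A\cap[1,m_{N}]$, any $B$ with $A\cap F\subseteq B$ satisfies $|B\cap[1,m_{N}]|\ge|A\cap F|=|A\cap[1,m_{N}]|>\delta N$, so $B\in\mathcal{F}_{\delta,n}$ (with the same $N$). For (III) I would first record the shift-invariance $\overline{d}_{m_{n}}(A-k)=\overline{d}_{m_{n}}(A)$, valid for every $k\in\mathbb{N}$: with $A_{k}:=A-k$ one has $|A_{k}\cap[1,m_{N}]|=|A\cap[k+1,k+\lfloor m_{N}\rfloor]|$, whence $\bigl||A_{k}\cap[1,m_{N}]|-|A\cap[1,m_{N}]|\bigr|\le k$, and division by $N\to\infty$ kills the discrepancy. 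Then, given $A\in\mathcal{F}$, pick any finite $\delta\in(0,\overline{d}_{m_{n}}(A))$: for every $n\in\mathbb{N}$ we have $\overline{d}_{m_{n}}(A-n)=\overline{d}_{m_{n}}(A)>\delta$, hence $A-n\in\bigcap_{\nu}\mathcal{F}_{\delta,\nu}=\mathcal{F}_{\delta}$, which is precisely (III).

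I do not expect a conceptual obstacle; the whole argument is bookkeeping. The one point deserving care is that (III) requires a single $\delta$ to serve all the shifts $A-n$ at once, and this is exactly what the shift-invariance of $\overline{d}_{m_{n}}$ supplies; a secondary nuisance is that $m_{n}$ need not be an integer, so ``$[1,m_{n}]$'' should be read as $[1,\lfloor m_{n}\rfloor]$, but every resulting discrepancy is $O(1)$ and vanishes upon dividing by $n\to\infty$.
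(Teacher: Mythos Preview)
Your argument is correct and follows essentially the same route as the paper: the same decomposition $\mathcal{F}=\bigcup_{\delta>0}\bigcap_{n\in\mathbb{N}}\mathcal{F}_{\delta,n}$ with $A\in\mathcal{F}_{\delta,n}$ iff $\exists\,N\ge n$ with $|A\cap[1,m_{N}]|/N>\delta$, and the same shift-invariance estimate $\bigl||A_{k}\cap[1,m_{N}]|-|A\cap[1,m_{N}]|\bigr|\le k$ to handle (III). Your write-up is in fact more explicit than the paper's (you spell out the finite set $F=\{1,\dots,\lfloor m_{N}\rfloor\}$ for (II) and the sandwich $\{A:\overline{d}_{m_{n}}(A)>\delta\}\subseteq\bigcap_{n}\mathcal{F}_{\delta,n}\subseteq\{A:\overline{d}_{m_{n}}(A)\ge\delta\}$), but the underlying ideas are identical.
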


We also introduce the following notion:

\begin{defn}\label{guptinjo}
Suppose $q\in [1,\infty),$ $(m_{n})$ is an increasing sequence in $[1,\infty)$ and $A\subseteq {\mathbb N}.$ Then we define:
\begin{itemize}
\item[(i)] The lower $l;q$-Banach density of $A,$ denoted shortly by $\underline{Bd}_{l;q}(A),$
as follows
$$
\underline{Bd}_{l;q}(A):=\liminf_{s\rightarrow +\infty}\liminf_{n\rightarrow \infty}\frac{|A \cap [n+1,n+s^{q}]|}{s}.
$$ 
\item[(ii)] The lower $u;q$-Banach density of $A,$ denoted shortly by $\underline{Bd}_{u;q}(A),$
as follows
$$
\underline{Bd}_{u;q}(A):=\limsup_{s\rightarrow +\infty}\liminf_{n\rightarrow \infty}\frac{|A \cap [n+1,n+s^{q}]|}{s}.
$$ 
\item[(iii)] The $l;q$-Banach density of $A,$ denoted shortly by $\overline{Bd}_{l;q}(A),$
as follows
$$
\overline{Bd}_{l;q}(A):=\liminf_{s\rightarrow +\infty}\limsup_{n\rightarrow \infty}\frac{|A \cap [n+1,n+s^{q}]|}{s}.
$$ 
\item[(iv)] The $u;q$-Banach density of $A,$ denoted shortly by $\overline{Bd}_{u;q}(A),$
as follows
$$
\overline{Bd}_{u;q}(A):=\limsup_{s\rightarrow +\infty}\limsup_{n\rightarrow \infty}\frac{|A \cap [n+1,n+s^{q}]|}{s}.
$$ 
\item[(v)] The lower $l;(m_{n})$-Banach density of $A,$ denoted shortly by $\underline{Bd}_{l;{m_{n}}}(A),$
as follows
$$
\underline{Bd}_{l;{m_{n}}}(A):=\liminf_{s\rightarrow +\infty}\liminf_{n\rightarrow \infty}\frac{|A \cap [n+1,n+m_{s}]|}{s}.
$$ 
\item[(vi)] The lower $u;(m_{n})$-Banach density of $A,$ denoted shortly by $\underline{Bd}_{u;{m_{n}}}(A),$
as follows
$$
\underline{Bd}_{u;{m_{n}}}(A):=\limsup_{s\rightarrow +\infty}\liminf_{n\rightarrow \infty}\frac{|A \cap [n+1,n+m_{s}]|}{s}.
$$ 
\item[(vii)] The (upper) $l;(m_{n})$-Banach density of $A,$ denoted shortly by $\overline{Bd}_{l;{m_{n}}}(A),$
as follows
$$
\overline{Bd}_{l;{m_{n}}}(A):=\liminf_{s\rightarrow +\infty}\limsup_{n\rightarrow \infty}\frac{|A \cap [n+1,n+m_{s}]|}{s}.
$$ 
\item[(viii)] The (upper) $u;(m_{n})$-Banach density of $A,$ denoted shortly by $\overline{Bd}_{u;{m_{n}}}(A),$
as follows
$$
\overline{Bd}_{u;{m_{n}}}(A):=\limsup_{s\rightarrow +\infty}\limsup_{n\rightarrow \infty}\frac{|A \cap [n+1,n+m_{s}]|}{s}.
$$ 
\end{itemize}
\end{defn}

Any of the densities introduced in the parts (i)-(viii) is translation invariant in the sense that for each $k\in {\mathbb N}$ the sets $A+k:=\{a+k : a\in A\}$ and $A-k:=\{a-k : a\in A,\ a>k\}$ has the same density as $A;$ in contrast to this, it is not true that
the density of $k A:=\{ka : a\in A\}$ equals $k^{-1}$ times the density of $A,$ in general ($k\in {\mathbb N}$).
As the next obvious lemma shows, it is sufficient to introduce the notion of lower and upper $(m_{n})$-density of $A$ as well as the notion of lower and upper $u;(m_{n})$-Banach density of $A$ only
in the case that $(m_{n})$ is a sequence of positive integers:

\begin{lem}\label{zelje-zelje}
Let $(m_{n})$ be an increasing sequence  in $[1,\infty),$ and let $A\subseteq {\mathbb N}.$ Then the following holds:
\begin{itemize}
\item[(i)]
$\underline{d}_{{m_{n}}}(A)=\underline{d}_{\lceil m_{n} \rceil}(A)=\underline{d}_{\lfloor m_{n} \rfloor}(A)$ and $\overline{d}_{{m_{n}}}(A)=\overline{d}_{\lceil m_{n} \rceil}(A)=\overline{d}_{\lfloor m_{n} \rfloor}(A).$
\item[(ii)] $\overline{Bd}_{u;{m_{n}}}(A)=\overline{Bd}_{u;{\lfloor m_{n} \rfloor}}(A)=
\overline{Bd}_{u;{\lceil m_{n} \rceil}}(A)$ and\\ $\underline{Bd}_{u;{m_{n}}}(A)=\underline{Bd}_{u;{\lfloor m_{n} \rfloor}}(A)=
\underline{Bd}_{u;{\lceil m_{n} \rceil}}(A).$
\end{itemize}
\end{lem}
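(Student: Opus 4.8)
The plan is to reduce both parts to one elementary observation: since $A\subseteq\mathbb{N}$ and all the shift parameters occurring in the Banach densities are integers, truncating an interval at a real number $m$ or at $\lfloor m\rfloor$ captures exactly the same elements of $A$, while replacing $\lfloor m\rfloor$ by $\lceil m\rceil$ perturbs the relevant counting function by at most $1$.

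First I would dispose of the floor equalities. For (i), an integer $a$ satisfies $a\le m_n$ iff $a\le\lfloor m_n\rfloor$; hence $A\cap[1,m_n]=A\cap[1,\lfloor m_n\rfloor]$ for every $n\in\mathbb{N}$, so the sequences $n\mapsto|A\cap[1,m_n]|/n$ and $n\mapsto|A\cap[1,\lfloor m_n\rfloor]|/n$ are literally identical, and passing to $\liminf$ and $\limsup$ gives $\underline{d}_{m_n}(A)=\underline{d}_{\lfloor m_n\rfloor}(A)$ and $\overline{d}_{m_n}(A)=\overline{d}_{\lfloor m_n\rfloor}(A)$. For (ii), fix $s$ and an integer $n$; since $a-n\in\mathbb{Z}$ we have $n+1\le a\le n+m_s$ iff $1\le a-n\le\lfloor m_s\rfloor$, i.e. iff $a\in[n+1,n+\lfloor m_s\rfloor]$, so $|A\cap[n+1,n+m_s]|=|A\cap[n+1,n+\lfloor m_s\rfloor]|$ for all $n,s$; dividing by $s$ and applying $\liminf_n$ (resp. $\limsup_n$) and then $\limsup_s$ yields $\underline{Bd}_{u;m_n}(A)=\underline{Bd}_{u;\lfloor m_n\rfloor}(A)$ and $\overline{Bd}_{u;m_n}(A)=\overline{Bd}_{u;\lfloor m_n\rfloor}(A)$.

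Then I would pass from the floor to the ceiling. Since $0\le\lceil m_n\rceil-\lfloor m_n\rfloor\le 1$, the interval $[1,\lceil m_n\rceil]$ contains at most one more integer than $[1,\lfloor m_n\rfloor]$, so $\bigl|\,|A\cap[1,\lceil m_n\rceil]|-|A\cap[1,\lfloor m_n\rfloor]|\,\bigr|\le 1$ and hence the two sequences $|A\cap[1,\lceil m_n\rceil]|/n$ and $|A\cap[1,\lfloor m_n\rfloor]|/n$ differ by at most $1/n\to 0$; two real sequences whose difference tends to $0$ have the same $\liminf$ and the same $\limsup$, which closes (i). For (ii), the analogous bound $\bigl|\,|A\cap[n+1,n+\lceil m_s\rceil]|-|A\cap[n+1,n+\lfloor m_s\rfloor]|\,\bigr|\le 1$ holds \emph{uniformly in} $n$, so for each fixed $s$ the two inner sequences in $n$ differ by at most $1/s$; therefore their $\liminf_n$ (resp. $\limsup_n$) differ by at most $1/s$, and letting $s\to\infty$ in the outer $\limsup_s$ gives the $\lceil\cdot\rceil$ equalities.

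\textbf{The main obstacle} is genuinely minor: the only point requiring care is the limit-interchange bookkeeping in (ii), where one must check that the $O(1/s)$ discrepancy between the ceiling and floor counting functions is uniform in the shift $n$ before taking $\liminf_n$ and then $\limsup_s$. One should also observe in passing that $(\lfloor m_n\rfloor)_n$ and $(\lceil m_n\rceil)_n$ need only be non-decreasing rather than strictly increasing, but this is harmless since Definitions \ref{prckojed} and \ref{guptinjo} are phrased purely in terms of $\liminf$/$\limsup$ of ratios.
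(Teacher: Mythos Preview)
Your argument is correct and follows essentially the same route as the paper: the paper simply records the chain of inequalities
\[
\frac{|A\cap[1,\lfloor m_n\rfloor]|}{n}\le\frac{|A\cap[1,m_n]|}{n}\le\frac{|A\cap[1,\lceil m_n\rceil]|}{n}\le\frac{1+|A\cap[1,\lfloor m_n\rfloor]|}{n}
\]
(and the analogous one with $[n+1,n+m_s]$), which encodes exactly your two observations in a single line. Your added remarks about the $O(1/s)$ discrepancy being uniform in $n$ and about the floor/ceiling sequences being merely non-decreasing are correct but are left implicit in the paper.
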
 

\begin{proof}
Follows simply from the next estimates ($n\in {\mathbb N}$):
$$
\frac{\bigl|A \cap [1,\lfloor m_{n} \rfloor ]\bigr|}{n}\leq \frac{\bigl|A \cap [1,m_{n}]\bigr|}{n} \leq \frac{\bigl|A \cap [1,\lceil m_{n} \rceil ]\bigr|}{n}\leq \frac{1+\bigl|A \cap [1,\lfloor m_{n} \rfloor ]\bigr|}{n}.
$$
and
\begin{align*}
\frac{\bigl|A \cap [n+1,n+\lfloor m_{s}\rfloor]\bigr|}{s} &\leq \frac{\bigl|A \cap [n+1,n+m_{s}]\bigr|}{s} 
\\ & \leq \frac{\bigl|A \cap [n+1,n+\lceil m_{s}\rceil]\bigr|}{s} \leq \frac{1+\bigl|A \cap [n+1,n+\lfloor m_{s} \rfloor]\bigr|}{s}.
\end{align*}
\end{proof}

Further on, the quantities $
\underline{Bd}_{l;{m_{n}}}(A)$ and $
\underline{Bd}_{u;{m_{n}}}(A)
$ are very unexciting if $\lim_{s\rightarrow +\infty}\frac{m_{s}}{s}=+\infty$ or $\lim_{s\rightarrow +\infty}\frac{m_{s}}{s}=0,$ when we have the following:

\begin{prop}\label{mile}
Suppose that $A\subseteq {\mathbb N},$  $(m_{n})$ is an increasing sequence in $[1,\infty)$ and $\lim_{s\rightarrow +\infty}\frac{m_{s}}{s}=+\infty$ or $\lim_{s\rightarrow +\infty}\frac{m_{s}}{s}=0.$ Then 
$$
\underline{Bd}_{l;{m_{n}}}(A)=
\underline{Bd}_{u;{m_{n}}}(A)\in \{0,+\infty\}.
$$
\end{prop}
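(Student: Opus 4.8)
The plan is to reduce the iterated limit to the asymptotics of a single sequence. For $s\in{\mathbb N}$ put
$f(s):=\liminf_{n\to\infty}\frac{|A\cap[n+1,n+m_{s}]|}{s}$,
so that by Definition \ref{guptinjo}(v)--(vi) we have $\underline{Bd}_{l;{m_{n}}}(A)=\liminf_{s\to\infty}f(s)$ and $\underline{Bd}_{u;{m_{n}}}(A)=\limsup_{s\to\infty}f(s)$. Since $A\subseteq{\mathbb N}$, for integer $n$ the interval $[n+1,n+m_{s}]$ meets ${\mathbb N}$ in exactly the block $[n+1,n+\lfloor m_{s}\rfloor]$, and the $\liminf$ of an integer-valued sequence is attained; hence $f(s)=\frac{1}{s}\,h(\lfloor m_{s}\rfloor)$, where $h(\sigma):=\liminf_{n\to\infty}|A\cap[n+1,n+\sigma]|$ for $\sigma\in{\mathbb N}$. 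It therefore suffices to show that $f(s)$ \emph{converges}, as $s\to\infty$, to a value in $\{0,+\infty\}$; this immediately gives both the equality of the two densities and the dichotomy.

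\emph{The case $\lim_{s\to\infty}m_{s}/s=0$} is immediate: since $|A\cap[n+1,n+\sigma]|\le\sigma$ for all $n$ and $\sigma\in{\mathbb N}$, we get $h(\sigma)\le\sigma$, whence $0\le f(s)=\frac{h(\lfloor m_{s}\rfloor)}{s}\le\frac{\lfloor m_{s}\rfloor}{s}\le\frac{m_{s}}{s}\to 0$, so $f(s)\to0$.

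\emph{The case $\lim_{s\to\infty}m_{s}/s=+\infty$} requires one structural fact. From the block decomposition $|A\cap[n+1,n+\sigma+\tau]|=|A\cap[n+1,n+\sigma]|+|A\cap[(n+\sigma)+1,(n+\sigma)+\tau]|$, together with $\liminf(a_{n}+b_{n})\ge\liminf a_{n}+\liminf b_{n}$ and a shift of the running index in the second term, the sequence $h$ is superadditive: $h(\sigma+\tau)\ge h(\sigma)+h(\tau)$. By Fekete's lemma the limit $\beta:=\lim_{\sigma\to\infty}h(\sigma)/\sigma$ exists and equals $\sup_{\sigma\in{\mathbb N}}h(\sigma)/\sigma$; by the very definition of the lower Banach density, $\beta=\underline{Bd}(A)\in[0,1]$. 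If $\beta=0$, the identification with the supremum gives $h(\sigma)\le\beta\sigma=0$, i.e.\ $h\equiv0$, so $f\equiv0$. If $\beta>0$, write $f(s)=\frac{\lfloor m_{s}\rfloor}{s}\cdot\frac{h(\lfloor m_{s}\rfloor)}{\lfloor m_{s}\rfloor}$; since $m_{s}/s\to\infty$ forces $m_{s}\to\infty$, the first factor satisfies $\lfloor m_{s}\rfloor/s\ge(m_{s}-1)/s\to+\infty$ while the second tends to $\beta>0$, so $f(s)\to+\infty$. In every case $f(s)$ converges to a point of $\{0,+\infty\}$, as required.

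I expect the only non-routine point to be the superadditivity/Fekete step — specifically the identification $\lim h(\sigma)/\sigma=\sup h(\sigma)/\sigma$, which is what forces $h$ to vanish identically once $\underline{Bd}(A)=0$ and thereby excludes any intermediate limiting value for $f$. (This also pinpoints why the hypothesis on $m_{s}/s$ cannot be dropped: for $m_{s}\equiv s$ one recovers $\underline{Bd}(A)$, which may be any number in $[0,1]$.) The remaining arguments are elementary estimates built on $|A\cap[n+1,n+\sigma]|\le\sigma$ and the splitting of the counting function over two adjacent blocks.
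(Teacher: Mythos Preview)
Your proof is correct and takes a genuinely different route from the paper's. The paper argues by a direct case split on the gap sequence of $A$: writing $A=\{d_{1}<d_{2}<\cdots\}$ and $e_{n}=d_{n+1}-d_{n}$, it distinguishes (when $m_{s}/s\to\infty$) between bounded gaps---where every sufficiently long window meets $A$ proportionally often, forcing the value $+\infty$---and unbounded gaps---where for each fixed $s$ one slides the window into a gap exceeding $m_{s}$, driving the inner $\liminf$ to at most $1/s$ and hence the outer limits to $0$. Your approach replaces this bare-hands dichotomy with the superadditivity of $h(\sigma)=\liminf_{n}|A\cap[n+1,n+\sigma]|$ and Fekete's lemma, identifying $\lim_{\sigma}h(\sigma)/\sigma=\sup_{\sigma}h(\sigma)/\sigma=\underline{Bd}(A)$; the paper's ``bounded gaps versus unbounded gaps'' is then precisely your ``$\underline{Bd}(A)>0$ versus $\underline{Bd}(A)=0$'' (indeed $h\equiv0$ says exactly that $A$ has arbitrarily long gaps). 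What your argument buys is a cleaner structural picture---the dichotomy threshold is literally $\underline{Bd}(A)$---and the fact that $f(s)$ genuinely \emph{converges}, so the equality of $\liminf_{s}$ and $\limsup_{s}$ is automatic rather than checked case by case. What the paper's argument buys is self-containment (no external lemma) and a concrete gap criterion that it immediately reuses in Proposition~\ref{mile-label} and Theorem~\ref{mile-prim}. One small remark: your aside that the integer-valued $\liminf$ is attained is true but unnecessary---the identity $f(s)=h(\lfloor m_{s}\rfloor)/s$ already follows from $A\subseteq\mathbb{N}$ alone.
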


\begin{proof}
The proof is obvious if the set $A$ is finite, when we have $
\underline{Bd}_{l;{m_{n}}}(A)=
\underline{Bd}_{u;{m_{n}}}(A)=0.$ If the set $A$ is infinite, then there exists a strictly increasing sequence $(d_{n})$ of positive integers such that $A=\{d_{n} : n\in {\mathbb N}\}.$ Set $e_{n}:=d_{n+1}-d_{n}$ ($n\in {\mathbb N}$). 
Clearly, if  $\lim_{s\rightarrow +\infty}\frac{m_{s}}{s}=0,$ then for each $s,\ n\in {\mathbb N}$ we have $|A \cap [n+1,n+m_{s}]|\leq m_{s}+1$ and therefore $
\underline{Bd}_{l;{m_{n}}}(A)=
\underline{Bd}_{u;{m_{n}}}(A)=0.$ Suppose now that $\lim_{s\rightarrow +\infty}\frac{m_{s}}{s}=+\infty .$
Then there exist two possibilities:
\begin{itemize}
\item[1.] The sequence $(e_{n})$ is bounded from above by a finite constant $M\geq 1.$ Then $d_{n}\leq d_{1}+(n-1)M,$ $n\in {\mathbb N}$ and any subinterval in $[d_{1},\infty)$ of length $2M$ contains at least one element of $A$, so that $|A \cap [n+1,n+m_{s}]| \geq  (m_{s}-2)/2M,$ $s,\ n\in {\mathbb N}$ and  $
\underline{Bd}_{l;{m_{n}}}(A)=
\underline{Bd}_{u;{m_{n}}}(A)=+\infty$ since $\lim_{s\rightarrow +\infty}\frac{m_{s}}{s}=+\infty.$
\item[2.] There exists a subsequence $(e_{n_{k}})$ of $(e_{n})$ tending to $+\infty.$ In this case, we have $
\underline{Bd}_{l;{m_{n}}}(A)=
\underline{Bd}_{u;{m_{n}}}(A)=0.$ To verify this, it is enough to show that for each fixed number $s\in {\mathbb N}$ we have 
$$
\liminf_{n\rightarrow +\infty}\frac{\bigl|A \cap [n+1,n+m_{s}]\bigr|}{s}\leq \frac{1}{s}.
$$
This follows from the fact that there exists a number $k_{0}$ such that $m_{s}-1 <e_{n_{k}}$ for all $k\geq k_{0},$ which clearly implies
$$
\lim_{k\rightarrow +\infty}\frac{\bigl|A \cap [d_{n_{k}},d_{n_{k}}+ m_{s}-1]\bigr|}{s}= \frac{1}{s}.
$$
\end{itemize}  
\end{proof}

In the second case, we have not used any of conditions $\lim_{s\rightarrow +\infty}\frac{m_{s}}{s}=+\infty$ or $\lim_{s\rightarrow +\infty}\frac{m_{s}}{s}=0$ (see also \cite{bay11}):

\begin{prop}\label{mile-label}
Suppose that there exists a strictly increasing sequence $(d_{n})$ of positive integers such that $A=\{d_{n} : n\in {\mathbb N}\}.$ Set $e_{n}:=d_{n+1}-d_{n}$ ($n\in {\mathbb N}$).  
If the sequence $(e_{n})$ is unbounded, then $
\underline{Bd}_{l;{m_{n}}}(A)=
\underline{Bd}_{u;{m_{n}}}(A)=0.$ 
\end{prop}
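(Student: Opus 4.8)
The plan is to prove the stronger assertion that the inner quantity
$$
f(s):=\liminf_{n\to\infty}\frac{\bigl|A\cap[n+1,n+m_{s}]\bigr|}{s}
$$
equals $0$ for every fixed $s\in{\mathbb N}$. Since, by Definition~\ref{guptinjo}(v)--(vi), we have $\underline{Bd}_{l;{m_{n}}}(A)=\liminf_{s\to+\infty}f(s)$ and $\underline{Bd}_{u;{m_{n}}}(A)=\limsup_{s\to+\infty}f(s)$, and since $f(s)\geq0$ for all $s,$ this will immediately yield $\underline{Bd}_{l;{m_{n}}}(A)=\underline{Bd}_{u;{m_{n}}}(A)=0.$ In the estimates below it is enough to keep track of $\lfloor m_{s}\rfloor$ (alternatively, for the $u;(m_{n})$-Banach densities one could first invoke Lemma~\ref{zelje-zelje}(ii) to reduce to the integer-valued case).

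So fix $s\in{\mathbb N}$. As $(e_{n})$ is an unbounded sequence of positive integers, we may choose a strictly increasing sequence $(n_{k})$ of indices with $e_{n_{k}}\to+\infty$, and then pick $k_{0}=k_{0}(s)$ such that $e_{n_{k}}>\lfloor m_{s}\rfloor$ for all $k\geq k_{0}$. Put $n:=d_{n_{k}}$ with $k\geq k_{0}$. Every element of $A$ that is $\leq d_{n_{k}}$ is $<n+1$, while the smallest element of $A$ exceeding $d_{n_{k}}$ is $d_{n_{k}+1}=d_{n_{k}}+e_{n_{k}}\geq d_{n_{k}}+\lfloor m_{s}\rfloor+1>d_{n_{k}}+m_{s}=n+m_{s}$; consequently $A\cap[n+1,n+m_{s}]=\emptyset$, i.e. $\bigl|A\cap[n+1,n+m_{s}]\bigr|=0$. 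Because $(d_{n})$ is strictly increasing, $n=d_{n_{k}}\to\infty$ as $k\to\infty$, so we have exhibited a sequence of integers $n\to\infty$ along which $\bigl|A\cap[n+1,n+m_{s}]\bigr|/s=0$; therefore $f(s)=0$.

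Since $s\in{\mathbb N}$ was arbitrary, we conclude
$$
\underline{Bd}_{l;{m_{n}}}(A)=\liminf_{s\to+\infty}f(s)=0=\limsup_{s\to+\infty}f(s)=\underline{Bd}_{u;{m_{n}}}(A),
$$
which is the claim. There is no genuine obstacle here: the argument is essentially an isolation of the second case in the proof of Proposition~\ref{mile}. The only point deserving a little care is that the outer operations defining the two Banach densities act on the \emph{inner} $\liminf$ over $n$, which is precisely why producing a single subsequence of ``gap-initial'' points $n=d_{n_{k}}$ — rather than controlling the counting function for \emph{all} large $n$ — already suffices; and one must be mildly attentive to the passage from $m_{s}$ to $\lfloor m_{s}\rfloor$ when $(m_{n})$ is not integer-valued.
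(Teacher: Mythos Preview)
Your proof is correct and follows essentially the same route as the paper, which simply observes that case~2 in the proof of Proposition~\ref{mile} did not use the hypothesis on $\lim_{s}m_{s}/s$ and therefore already establishes the result. The only (cosmetic) difference is that you take $n=d_{n_{k}}$ rather than $n=d_{n_{k}}-1$, so your interval misses $A$ entirely and yields $f(s)=0$ instead of the paper's bound $f(s)\leq 1/s$; both conclusions give the same limit as $s\to\infty$.
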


\begin{rem}\label{qad}
With the requirements of Proposition \ref{mile-label} being satisfied, we have that $\underline{d}_{{m_{n}}} (A)$ can be strictly positive or equal to zero. For example, if $A:=\{n^{2} : n\in {\mathbb N}\}$ and $m_{n}:=n^{2}$ ($n\in {\mathbb N}$), then  $\underline{d}_{{m_{n}}} (A)=1,$ while in the case that $A:=\{n^{2} : n\in {\mathbb N}\}$ and $m_{n}:=n$ ($n\in {\mathbb N}$), we have $\underline{d}_{{m_{n}}} (A)=0.$
\end{rem}

Let $A$ and $(e_{n})$ be defined as above, and let $(e_{n})$ be bounded. In the case that $\lim_{s\rightarrow +\infty}\frac{m_{s}}{s}=+\infty ,$ we have $
\underline{Bd}_{l;{m_{n}}}(A)=
\underline{Bd}_{u;{m_{n}}}(A)=+\infty$ but also $
\underline{d}_{{m_{n}}} (A)=+\infty$ since
the interval $[1,m_{n}]$ contains at least $\lceil cm_{n}\rceil$ elements of the set $A$ for some constant $c\in (0,1)$ and therefore 
$
\liminf_{n\rightarrow \infty}\frac{|A \cap [1,m_{n}]|}{n}\geq c\liminf_{n\rightarrow \infty}\frac{m_{n}}{n}=+\infty.$ Furthermore, if $\liminf_{s\rightarrow +\infty}\frac{m_{s}}{s}=0,$ then it is easily seen that  $
\underline{d}_{{m_{n}}} (A)=0.$
Taking into account the cases examined in the proof of Proposition \ref{mile}, we can clarify the following:

\begin{thm}\label{mile-prim}
Suppose that $A\subseteq {\mathbb N}$ and $(m_{n})$ is an increasing sequence in $[1,\infty).$ Then the following holds:
\begin{itemize}
\item[(i)]
If $\liminf_{s\rightarrow +\infty}\frac{m_{s}}{s}=0,$ then $
\underline{Bd}_{l;{m_{n}}}(A)= \underline{d}_{{m_{n}}} (A)=0.$
\item[(ii)] If $\lim_{s\rightarrow +\infty}\frac{m_{s}}{s}=0,$ then 
$
\underline{Bd}_{l;{m_{n}}}(A)=\underline{Bd}_{u;{m_{n}}}(A) = \underline{d}_{{m_{n}}} (A)=0.$ 
\item[(iii)] If $\lim_{s\rightarrow +\infty}\frac{m_{s}}{s}=+\infty,$ then $
\underline{Bd}_{l;{m_{n}}}(A)=\underline{Bd}_{u;{m_{n}}}(A) = \underline{d}_{{m_{n}}} (A)$  
and the common value of these terms equals $+\infty ,$ if there exists a strictly increasing sequence $(d_{n})_{n\in {\mathbb N}}$ of positive integers such that $A=\{d_{n} : n\in {\mathbb N}\}$ and the sequence $(e_{n}:=d_{n+1}-d_{n})_{n\in {\mathbb N}}$ is bounded, or $0,$ otherwise. 
\end{itemize}
\end{thm}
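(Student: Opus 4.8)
The plan is to handle the three items in turn, in each case by squeezing the relevant density between $0$ and a multiple of the quotient $m_s/s$, using the elementary fact that an interval $[n+1,n+m_s]$ with $n\in\mathbb N$ (respectively $[1,m_n]$) contains at most $\lfloor m_s\rfloor\le m_s$ (respectively $\lfloor m_n\rfloor\le m_n$) integers; item (iii) will then be reduced to the case analysis already carried out in the proof of Proposition~\ref{mile}.

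For (i): the bounds $|A\cap[1,m_n]|\le m_n$ and $|A\cap[n+1,n+m_s]|\le m_s$ give, after dividing by $n$ and by $s$ respectively, $\underline d_{m_n}(A)\le\liminf_{n\to\infty}(m_n/n)$ and $\underline{Bd}_{l;m_n}(A)\le\liminf_{s\to\infty}(m_s/s)$; under the hypothesis of (i) both right-hand sides vanish, and as the densities are non-negative they equal $0$. For (ii) the hypothesis $\lim_s(m_s/s)=0$ gives in particular $\liminf_s(m_s/s)=0$, so (i) already yields $\underline d_{m_n}(A)=\underline{Bd}_{l;m_n}(A)=0$; for the remaining quantity the same bound gives $\underline{Bd}_{u;m_n}(A)\le\limsup_{s\to\infty}(m_s/s)=0$ (or: by Proposition~\ref{mile} this number already lies in $\{0,+\infty\}$, and the bound excludes $+\infty$).

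For (iii): Proposition~\ref{mile} gives $\underline{Bd}_{l;m_n}(A)=\underline{Bd}_{u;m_n}(A)$ with common value $0$ or $+\infty$, so it remains to decide which and to evaluate $\underline d_{m_n}(A)$, and I would argue along the three cases used in the proof of Proposition~\ref{mile}. If $A$ is finite, then $|A\cap[1,m_n]|$ is eventually constant and $[n+1,n+m_s]\cap A=\emptyset$ for all large $n$, so all three quantities are $0$. If $A=\{d_n:n\in\mathbb N\}$ with $(e_n)$ bounded by some $M\ge1$, then $d_k\le d_1+(k-1)M$ gives $|A\cap[1,m_n]|\ge(m_n-d_1)/M$, whence (since $m_n/n\to\infty$ forces $m_n\to\infty$) $\underline d_{m_n}(A)\ge\liminf_n\frac{m_n-d_1}{Mn}=+\infty$; moreover every run of $2M$ consecutive integers above $d_1$ meets $A$, so $|A\cap[n+1,n+m_s]|\ge m_s/(2M)-1$ for all $n$ with $n+1\ge d_1$, and hence $\liminf_n\frac{|A\cap[n+1,n+m_s]|}{s}\ge\frac{m_s}{2Ms}-\frac1s\to+\infty$, so all three quantities are $+\infty$. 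If instead $A=\{d_n:n\in\mathbb N\}$ with $(e_n)$ unbounded, Proposition~\ref{mile-label} gives $\underline{Bd}_{l;m_n}(A)=\underline{Bd}_{u;m_n}(A)=0$.

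The step I expect to demand the most care is exactly this last subcase, because there $\underline d_{m_n}(A)$ decouples from the two Banach densities. By Remark~\ref{qad}, for $A=\{n^2:n\in\mathbb N\}$ and $m_n=n^2$ (so $m_s/s=s\to\infty$ and $(e_n)$ is unbounded) one has $\underline d_{m_n}(A)=1$, while $\underline{Bd}_{l;m_n}(A)=\underline{Bd}_{u;m_n}(A)=0$ by Proposition~\ref{mile-label}; and $\underline d_{m_n}(A)$ can equally be $0$ (for instance $A=\{2^k\}$, $m_n=n^2$) or $+\infty$. So the chain of equalities in (iii) is to be read as holding in the finite and bounded-gap subcases (with value $0$, respectively $+\infty$), whereas in the unbounded-gap subcase one should claim only $\underline{Bd}_{l;m_n}(A)=\underline{Bd}_{u;m_n}(A)=0$; accordingly I would present (iii) explicitly as this three-way case split, leaving $\underline d_{m_n}(A)$ unconstrained in the unbounded-gap subcase, and the estimates within each subcase are routine.
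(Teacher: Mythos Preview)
Your arguments for (i) and (ii) are correct and coincide with the paper's: the paper, in the paragraph immediately preceding the theorem, records that ``if $\liminf_{s\to+\infty}\frac{m_s}{s}=0$, then it is easily seen that $\underline{d}_{m_n}(A)=0$'', and the Banach-density halves are equally direct from the trivial bound $|A\cap[n+1,n+m_s]|\le m_s$. For (iii) you also follow the paper's line: the case split (finite, bounded gaps, unbounded gaps) is taken over from Proposition~\ref{mile}, and your verification that $\underline{d}_{m_n}(A)=+\infty$ in the bounded-gap subcase is essentially the paper's sentence ``the interval $[1,m_n]$ contains at least $\lceil cm_n\rceil$ elements of the set $A$ for some constant $c\in(0,1)$''.

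Your caution about the unbounded-gap subcase is well placed and in fact identifies a defect in the theorem as stated. The paper's own Remark~\ref{qad} supplies the counterexample you cite: for $A=\{n^2:n\in{\mathbb N}\}$ and $m_n=n^2$ one has $\lim_s m_s/s=+\infty$, the gaps $e_n=2n+1$ are unbounded, $\underline{Bd}_{l;m_n}(A)=\underline{Bd}_{u;m_n}(A)=0$ by Proposition~\ref{mile-label}, yet $\underline{d}_{m_n}(A)=1$. Thus the asserted equality $\underline{Bd}_{l;m_n}(A)=\underline{Bd}_{u;m_n}(A)=\underline{d}_{m_n}(A)$ fails in this subcase, and the paper's ``proof'' (the discussion before the theorem together with Proposition~\ref{mile} and Proposition~\ref{mile-label}) does not establish it either: only the two Banach densities are handled there. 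Your proposed reformulation---retaining $\underline{Bd}_{l;m_n}(A)=\underline{Bd}_{u;m_n}(A)=0$ but leaving $\underline{d}_{m_n}(A)$ unconstrained in the unbounded-gap subcase---is exactly what the surrounding results actually support.
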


Since it is well known that $
\underline{Bd}_{l;q}(A)=
\underline{Bd}_{u;q}(A)=
\underline{Bd}(A) \in [0,1]$ for $q=1$ (see e.g. \cite{grekos}), the above theorem in combination with \eqref{jebu} immediately implies the following:

\begin{cor}\label{mile-duo}
Let $A\subseteq {\mathbb N}.$
\begin{itemize}
\item[(i)] Suppose that $q=1.$ Then $0\leq 
\underline{Bd}_{l;q}(A)=
\underline{Bd}_{u;q}(A)=\underline{Bd}(A) \leq \underline{d}(A) \leq 1.$
\item[(ii)] Suppose that $q> 1.$ Then $
\underline{Bd}_{l;q}(A)=
\underline{Bd}_{u;q}(A):=
\underline{Bd}_{q}(A) =\underline{d}_{q}(A)$ and the common value of these terms equals $+\infty ,$ if there exists a strictly increasing sequence $(d_{n})_{n\in {\mathbb N}}$ of positive integers such that $A=\{d_{n} : n\in {\mathbb N}\}$ and the sequence $(e_{n}:=d_{n+1}-d_{n})_{n\in {\mathbb N}}$ is bounded, or $0,$ otherwise. 
\end{itemize}
\end{cor}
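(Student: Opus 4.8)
The plan is to obtain both parts as specializations of material already established, so the argument is short. For (i), I set $q=1$; then in Definition~\ref{guptinjo}(i)--(ii) the exponent makes $s^{q}$ equal to $s$, so that $\underline{Bd}_{l;q}(A)$ and $\underline{Bd}_{u;q}(A)$ are just the two iterated limits $\liminf_{s}\liminf_{n}$ and $\limsup_{s}\liminf_{n}$ of the quotients $|A\cap[n+1,n+s]|/s$. The classical fact (recalled from \cite{grekos}) that for these particular quotients the order of the outer $s$-limit is immaterial gives $\underline{Bd}_{l;1}(A)=\underline{Bd}_{u;1}(A)=\underline{Bd}(A)$, a number in $[0,1]$; combining this with the portion $0\le\underline{Bd}(A)\le\underline{d}(A)\le 1$ of \eqref{jebu} yields the asserted chain. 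There is essentially nothing to do here beyond unwinding notation.

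For (ii) I would apply Theorem~\ref{mile-prim} to the sequence $m_{n}:=n^{q}$, which is increasing in $[1,\infty)$. Since $q>1$, we have $m_{s}/s=s^{q-1}\to+\infty$ as $s\to+\infty$, so hypothesis (iii) of that theorem is in force and gives $\underline{Bd}_{l;m_{n}}(A)=\underline{Bd}_{u;m_{n}}(A)=\underline{d}_{m_{n}}(A)$, with this common value equal to $+\infty$ precisely when $A$ can be enumerated as a strictly increasing sequence $(d_{n})$ whose gap sequence $(e_{n}:=d_{n+1}-d_{n})$ is bounded, and equal to $0$ otherwise. To conclude I would note that for $m_{n}=n^{q}$ the identities $\underline{d}_{m_{n}}(A)=\underline{d}_{q}(A)$, $\underline{Bd}_{l;m_{n}}(A)=\underline{Bd}_{l;q}(A)$ and $\underline{Bd}_{u;m_{n}}(A)=\underline{Bd}_{u;q}(A)$ hold by direct inspection of Definitions~\ref{prckojed}(i),(iii) and \ref{guptinjo}(i),(ii),(v),(vi), so the displayed equalities become exactly the asserted $\underline{Bd}_{l;q}(A)=\underline{Bd}_{u;q}(A)=\underline{Bd}_{q}(A)=\underline{d}_{q}(A)$ together with the dichotomy for the value.

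The only step requiring a mild amount of care, and it is genuinely minor, is the invocation of the ``well-known'' $q=1$ identity in part (i): one should either cite \cite{grekos} in the precise form asserting equality of the two iterated limits defining the lower Banach density, or insert the short monotonicity argument showing that passing from a window of length $s$ to one of length $s'$ perturbs the averaged count in a controlled way, which forces $\liminf_{s}\liminf_{n}=\limsup_{s}\liminf_{n}$. Everything else is a direct substitution into Theorem~\ref{mile-prim}, so there is no real obstacle.
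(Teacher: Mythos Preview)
Your proposal is correct and follows essentially the same route as the paper: for (i) you unwind the definitions at $q=1$, invoke the classical equality from \cite{grekos}, and combine with \eqref{jebu}; for (ii) you apply Theorem~\ref{mile-prim}(iii) to $m_{n}=n^{q}$ using $s^{q-1}\to+\infty$. The paper's justification is exactly this, stated in a single sentence just before the corollary.
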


For the sequel, let us note that $
\overline{Bd}_{l;q}(A)=
\overline{Bd}_{u;q}(A):=
\overline{Bd}_{q}(A)\in [0,1]$ provided that $q=1,$ when also the equality $\underline{Bd}_{q}(A)+\overline{Bd}_{q}({\mathbb N} \setminus A)=1$ takes the place (\cite{grekos}). This is no longer true in the case that $q>1:$ 

\begin{example}\label{gustish} 
\begin{itemize}
\item[(i)] Let $(s_{k}')$ be any increasing sequence of positive integers satisfying that $s_{k}'+(s_{k}')^{q}<s_{k+1}',$ $k\in {\mathbb N}$ as well as
$$
s_{k}'
\geq k \bigl(s_{k}'\bigr)^{\frac{q-1}{2q}} +2\bigl(s_{k}'\bigr)^{\frac{q-1}{2q(q+\epsilon -1)}},\quad k \in {\mathbb N},
$$
for some $\epsilon>0.$ Set $s_{k}:=\lfloor (s_{k}')^{\frac{q-1}{2q(q+\epsilon -1)}}\rfloor $ and $l_{k}:=\lfloor (s_{k}'/s_{k})^{q}  \rfloor$ for all $k\in {\mathbb N}.$ Let $A\subseteq {\mathbb N}$ have no elements outside the set $\bigcup_{k\in {\mathbb N}}[s_{k}'+1,s_{k}'+(s_{k}')^{q}],$ and let $s_{k}^{1-\epsilon}\leq |A \cap [s_{k}'+ls_{k}^{q},s_{k}'+(l+1)s_{k}^{q}]| \leq s_{k}^{1-\epsilon }+1$ for all $k\in {\mathbb N}$ and $0\leq l\leq l_{k}.$ Then it can be easily seen that, for every $n\in {\mathbb N},$ we have $|A\cap [n+1,n+s_{k}^{q}]| \leq 2(1+s_{k}^{1-\epsilon}),$ $n\in {\mathbb N}$ so that $
\overline{Bd}_{l;q}(A)=0.$ On the other hand, we have that $|A \cap [s_{k}'+1,s_{k}'+(s_{k}')^{q}]| \geq l_{k}s_{k}^{1-\epsilon} \geq ks_{k}'$; hence, $
\overline{Bd}_{u;q}(A)=+\infty.$
\item[(ii)] Assume that 
$(a_{n})$ and $(b_{n})$ are two sequences of positive reals tending to infinity,
$0<a_{1}<b_{1}<a_{2}<b_{2}<\cdot \cdot \cdot <a_{n} < b_{n}<\cdot \cdot \cdot,$ $c_{n}:=|{\mathbb N} \cap [a_{n},b_{n}]|$ ($n\in {\mathbb N}$) and $A:={\mathbb N} \cap \bigcup_{n\in {\mathbb N}}[a_{n},b_{n}].$ Let $l>a_{1}$ be given. Then there exist two cases: $a_{n}\leq l^{q}\leq b_{n}$ for some $n\in {\mathbb N},$ when
$|A \cap [1,l^{q}]|/l \leq a_{n}^{^{(-1)/q}}\sum_{j=1}^{n}c_{j}$
and $|A \cap [1,l^{q}]|/l \geq b_{n}^{^{(-1)/q}}\sum_{j=1}^{n}c_{j} \geq  a_{n+1}^{^{(-1)/q}}\sum_{j=1}^{n}c_{j}$
or $b_{n}< l^{q}< a_{n+1}$ for some $n\in {\mathbb N},$ when
$|A \cap [1,l^{q}]|/l \leq b_{n}^{^{(-1)/q}}\sum_{j=1}^{n}c_{j} \leq a_{n}^{^{(-1)/q}}\sum_{j=1}^{n}c_{j}  $
and $|A \cap [1,l^{q}]|/l \geq a_{n+1}^{^{(-1)/q}}\sum_{j=1}^{n}c_{j}.  $
Summing up, we get:
\begin{align}\label{qws}
\limsup_{n\rightarrow \infty}\frac{c_{1}+\cdot \cdot \cdot +c_{n}}{a_{n+1}^{1/q}} \leq \overline{d_{q}}(A)=\limsup_{l\rightarrow \infty}\frac{\bigl| A\cap [1,l^{q}] \bigr|}{l} \leq \limsup_{n\rightarrow \infty}\frac{c_{1}+\cdot \cdot \cdot +c_{n}}{a_{n}^{1/q}}.
\end{align}
Consider first the following case $a_{n}:=n^{2q}$ and $b_{n}:=n^{2q}+n$ ($n\in {\mathbb N},$ $n\geq n_{0},$ where $n_{0}\in {\mathbb N}$ is chosen so that $n^{2q}+n<(n+1)^{2q},$ $n\geq n_{0}$). Then \eqref{qws} yields that $\overline{d_{q}}(A)\leq 1.$ On the other hand, let $s_{0}>0$ be such that $(\lfloor s^{q} \rfloor -2)/s >1$ for $s\geq s_{0}.$ Then the set $A \cap [(a_{n}-1)+1, (a_{n}-1)+s^{q} ]$ has at least $\lfloor s^{q} \rfloor -2$ elements for $n\geq n_{1}$ suff. large, so that $\overline{Bd}_{l;q}(A)=\infty ,$ which cannot be expected in the case that $q=1$ (observe also that Proposition \ref{mile-label} implies that $\underline{Bd}_{u;q}(A)=0 $). 
Consider now the case in which $1<q'<q,$ $a_{n}:= \lfloor n^{q'} \rfloor$ and $b_{n}:=\lfloor n^{q'}\rfloor+1$ ($n\in {\mathbb N},$ $n\geq n_{2},$ where $n_{2}\in {\mathbb N}$ is chosen so that $\lfloor n^{q'} \rfloor +1< \lfloor(n+1)^{q'} \rfloor,$ $n\geq n_{2}$).
Then the first inequality in \eqref{qws} yields $\overline{d_{q}}(A)=\infty.$ On the other hand, for every $s>0$ and for every increasing sequence $(n_{l})$ tending to infinity, it is easily seen that there exists $l_{0}\in {\mathbb N}$ such that the interval $[n_{l}+1,n_{l}+s^{q}]$ contains at most one element of set $A$ for any $l\geq l_{0},$ which immediately implies that $\overline{Bd}_{l;q}(A)=0$ so that the inequality $\overline{d}_{q}(A)\leq \overline{Bd}_{l;q}(A)$ does not hold generally in the case that $q>1.$ 
\end{itemize}
\end{example}

We continue by stating the following result:

\begin{prop}\label{zapi}
Suppose that $A\subseteq {\mathbb N}$ and $(m_{n})$ is an increasing sequence  in $[1,\infty).$ Then the following holds:
\begin{itemize}
\item[(i)]
$\overline{Bd}_{u;{m_{n}}}(A)+\overline{Bd}_{u;{m_{n}}}({\mathbb N} \setminus A)\geq \limsup_{n \rightarrow \infty}\frac{m_{n}}{n} .$ Especially, if\\ $\limsup_{n \rightarrow \infty}\frac{m_{n}}{n}=\infty ,$ then 
$$
\overline{Bd}_{u;{m_{n}}}(A)+\overline{Bd}_{u;{m_{n}}}({\mathbb N} \setminus A)=\infty.
$$
\item[(ii)]
$\underline{Bd}_{l;{m_{n}}}(A)+\underline{Bd}_{l;{m_{n}}}({\mathbb N} \setminus A)\leq \liminf_{n \rightarrow \infty}\frac{m_{n}}{n} .$ Especially, if\\ $\liminf_{n \rightarrow \infty}\frac{m_{n}}{n}=0 ,$ then 
$$
\underline{Bd}_{l;{m_{n}}}(A)=\underline{Bd}_{l;{m_{n}}}({\mathbb N} \setminus A)=0.
$$
\item[(iii)] Suppose that $\lim_{n \rightarrow \infty}\frac{m_{n}}{n}$ does not exist in $[0,\infty].$ Then 
$$
\overline{Bd}_{u;{m_{n}}}(A)+\overline{Bd}_{u;{m_{n}}}({\mathbb N} \setminus A)>\underline{Bd}_{l;{m_{n}}}(A)+\underline{Bd}_{l;{m_{n}}}({\mathbb N} \setminus A)
$$
and particularly
$$
\overline{Bd}_{u;{m_{n}}}(A)>\underline{Bd}_{l;{m_{n}}}(A) \mbox{ or } \overline{Bd}_{u;{m_{n}}}({\mathbb N} \setminus A)>\underline{Bd}_{l;{m_{n}}}({\mathbb N} \setminus A).
$$
\end{itemize}
\end{prop}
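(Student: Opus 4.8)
My plan is to reduce everything to the single identity that, for all $n,s\in{\mathbb N}$, the interval $[n+1,n+m_{s}]$ contains exactly $\lfloor m_{s}\rfloor$ positive integers, so that
$$
\bigl|A\cap[n+1,n+m_{s}]\bigr|+\bigl|({\mathbb N}\setminus A)\cap[n+1,n+m_{s}]\bigr|=\lfloor m_{s}\rfloor .
$$
Dividing by $s$ produces two nonnegative sequences in the variable $n$ whose sum equals the constant $\lfloor m_{s}\rfloor/s$; I shall also use that $\liminf_{s\to\infty}\lfloor m_{s}\rfloor/s=\liminf_{s\to\infty}m_{s}/s$ and $\limsup_{s\to\infty}\lfloor m_{s}\rfloor/s=\limsup_{s\to\infty}m_{s}/s$, since these two sequences differ by at most $1/s$. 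Apart from this, the proof needs only the sub-/super-additivity of $\limsup$ and $\liminf$, applied twice: once in $n$, once in $s$.

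For (i), fix $s$ and apply $\limsup_{n}(x_{n}+y_{n})\le\limsup_{n}x_{n}+\limsup_{n}y_{n}$ to $x_{n}=\bigl|A\cap[n+1,n+m_{s}]\bigr|/s$ and $y_{n}=\bigl|({\mathbb N}\setminus A)\cap[n+1,n+m_{s}]\bigr|/s$, whose sum equals $\lfloor m_{s}\rfloor/s$; this gives
$$
\limsup_{n\to\infty}\frac{\bigl|A\cap[n+1,n+m_{s}]\bigr|}{s}+\limsup_{n\to\infty}\frac{\bigl|({\mathbb N}\setminus A)\cap[n+1,n+m_{s}]\bigr|}{s}\ \ge\ \frac{\lfloor m_{s}\rfloor}{s}.
$$
Now take $\limsup_{s\to\infty}$ of both sides and invoke subadditivity of $\limsup$ once more, this time in the variable $s$: one obtains $\overline{Bd}_{u;{m_{n}}}(A)+\overline{Bd}_{u;{m_{n}}}({\mathbb N}\setminus A)\ge\limsup_{s\to\infty}\lfloor m_{s}\rfloor/s=\limsup_{n\to\infty}m_{n}/n$, and the ``especially'' clause follows at once in $[0,\infty]$. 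Part (ii) is proved by the same computation with every $\limsup$ replaced by $\liminf$ and $\liminf_{n}(x_{n}+y_{n})\ge\liminf_{n}x_{n}+\liminf_{n}y_{n}$ used in place of subadditivity, in both the inner and the outer passage to the limit; when $\liminf_{n}m_{n}/n=0$ a sum of two nonnegative quantities is $\le 0$, forcing each to vanish.

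For (iii), the failure of $\lim_{n}m_{n}/n$ to exist in $[0,\infty]$ is exactly the statement $\liminf_{n}m_{n}/n<\limsup_{n}m_{n}/n$, and this strict inequality forces $\liminf_{n}m_{n}/n<\infty$. Chaining part (ii), this strict inequality, and part (i) yields
$$
\underline{Bd}_{l;{m_{n}}}(A)+\underline{Bd}_{l;{m_{n}}}({\mathbb N}\setminus A)\le\liminf_{n\to\infty}\frac{m_{n}}{n}<\limsup_{n\to\infty}\frac{m_{n}}{n}\le\overline{Bd}_{u;{m_{n}}}(A)+\overline{Bd}_{u;{m_{n}}}({\mathbb N}\setminus A),
$$
and the strictness is preserved in $[0,\infty]$ because $\liminf_{n}m_{n}/n$ is finite while $\limsup_{n}m_{n}/n$ strictly exceeds it. For the final clause, if both $\overline{Bd}_{u;{m_{n}}}(A)\le\underline{Bd}_{l;{m_{n}}}(A)$ and $\overline{Bd}_{u;{m_{n}}}({\mathbb N}\setminus A)\le\underline{Bd}_{l;{m_{n}}}({\mathbb N}\setminus A)$ held, summing them would contradict the displayed strict inequality.

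I expect the only genuinely delicate point to be the extended-real bookkeeping in (iii): one must make sure the strict inequality is not spoiled when $\limsup_{n}m_{n}/n=+\infty$, and this is precisely where the hypothesis that $\lim_{n}m_{n}/n$ does not exist is indispensable, since it is what keeps $\liminf_{n}m_{n}/n$ --- hence the entire lower bound --- finite. All the remaining steps are routine applications of the sub-/super-additivity of $\liminf$ and $\limsup$ carried out twice.
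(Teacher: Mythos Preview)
Your proof is correct and follows essentially the same approach as the paper: both arguments start from the count of integers in $[n+1,n+m_{s}]$ (you use the exact value $\lfloor m_{s}\rfloor$, the paper uses the sandwich $m_{s}-1\le\cdot\le m_{s}+1$), then pass to $\limsup$/$\liminf$ first in $n$ and then in $s$ via sub-/super-additivity, and finally derive (iii) by chaining (i) and (ii) with the strict inequality $\liminf m_{n}/n<\limsup m_{n}/n$. The only substantive addition you make is the explicit remark that $\liminf m_{n}/n<\infty$ keeps the lower side finite, which is exactly what is needed to justify the extended-real arithmetic in (iii).
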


\begin{proof}
It is clear that 
$$
\frac{\bigl|A \cap [n+1,n+m_{s}]\bigr|}{s}+\frac{\bigl| ({\mathbb N} \setminus A) \cap [n+1,n+m_{s}]\bigr|}{s}\geq \frac{m_{s}-1}{s},\quad s,\ n\in {\mathbb N}.
$$
Taking the limit superior as $n\rightarrow \infty,$ we get that, for every $s\in {\mathbb N},$
$$
\limsup_{n\rightarrow \infty}\frac{\bigl|A \cap [n+1,n+m_{s}]\bigr|}{s}+\limsup_{n\rightarrow \infty}\frac{\bigl| ({\mathbb N} \setminus A) \cap [n+1,n+m_{s}]\bigr|}{s}\geq \frac{m_{s}-1}{s}.
$$
Taking now the limit superior as $s\rightarrow \infty,$ we get
that 
\begin{align*}
& \limsup_{s\rightarrow \infty}\limsup_{n\rightarrow \infty}\frac{\bigl|A \cap [n+1,n+m_{s}]\bigr|}{s}+\limsup_{s\rightarrow \infty}\limsup_{n\rightarrow \infty}\frac{\bigl| ({\mathbb N} \setminus A) \cap [n+1,n+m_{s}]\bigr|}{s}
\\ & \geq \limsup_{s\rightarrow \infty}\frac{m_{s}-1}{s}=\limsup_{s\rightarrow \infty}\frac{m_{s}}{s},
\end{align*}
which clearly implies the part (i). The assertion (ii) can be deduced similarly, by observing that
$$
\frac{\bigl|A \cap [n+1,n+m_{s}]\bigr|}{s}+\frac{\bigl| ({\mathbb N} \setminus A) \cap [n+1,n+m_{s}]\bigr|}{s}\leq \frac{m_{s}+1}{s},\quad s,\ n\in {\mathbb N}
$$
and taking after that the limit inferiors as $n\rightarrow \infty$ and $s\rightarrow \infty .$ Suppose now that $\lim_{n \rightarrow \infty}\frac{m_{n}}{n}$ does not exist. Then $\limsup_{n \rightarrow \infty}\frac{m_{n}}{n}>\liminf_{n \rightarrow \infty}\frac{m_{n}}{n}$ and the part (iii) follows by applying (i)-(ii).
\end{proof}

\begin{prop}\label{zapiw}
Suppose that $A,\ B\subseteq {\mathbb N}$ and $(m_{n})$ is an increasing sequence  in $[1,\infty).$ Then 
$$
\overline{Bd}_{u;{m_{n}}}(A \cup B)\leq \overline{Bd}_{u;{m_{n}}}(A)+\overline{Bd}_{u;{m_{n}}}(B).
$$
\end{prop}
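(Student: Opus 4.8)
The plan is to prove the subadditivity directly from the elementary set-theoretic inequality $|(A\cup B)\cap I|\le |A\cap I|+|B\cap I|$ for any finite interval $I\subseteq{\mathbb N}$, and then push it through the two nested limit operations defining $\overline{Bd}_{u;{m_{n}}}$.

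First I would fix $s\in{\mathbb N}$ and observe that for every $n\in{\mathbb N}$,
$$
\frac{\bigl|(A\cup B)\cap [n+1,n+m_{s}]\bigr|}{s}\leq \frac{\bigl|A\cap [n+1,n+m_{s}]\bigr|}{s}+\frac{\bigl|B\cap [n+1,n+m_{s}]\bigr|}{s}.
$$
Taking $\limsup_{n\rightarrow\infty}$ on both sides and using the standard fact that the limit superior of a sum is at most the sum of the limit superiors, I get, for each fixed $s$,
$$
\limsup_{n\rightarrow\infty}\frac{\bigl|(A\cup B)\cap [n+1,n+m_{s}]\bigr|}{s}\leq \limsup_{n\rightarrow\infty}\frac{\bigl|A\cap [n+1,n+m_{s}]\bigr|}{s}+\limsup_{n\rightarrow\infty}\frac{\bigl|B\cap [n+1,n+m_{s}]\bigr|}{s}.
$$

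Next I would apply $\limsup_{s\rightarrow\infty}$ to the last inequality. Again using subadditivity of the limit superior (now in the variable $s$) on the right-hand side, the right side is bounded above by $\overline{Bd}_{u;{m_{n}}}(A)+\overline{Bd}_{u;{m_{n}}}(B)$, while the left side is by definition $\overline{Bd}_{u;{m_{n}}}(A\cup B)$. This yields the claimed inequality. One should note that all quantities involved lie in $[0,\infty]$, so the arithmetic of the extended half-line is being used, but since we only ever add and compare (never subtract), no indeterminate forms arise; if either term on the right is $+\infty$ the inequality is trivial.

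There is essentially no obstacle here: the only point requiring a moment's care is that the inequality "limsup of a sum $\le$ sum of limsups" must be invoked twice, once in $n$ and once in $s$, and that it remains valid for $[0,\infty]$-valued sequences. Everything else is the trivial counting inequality for $A\cup B$. I would present this as a three-line proof, possibly even folding it into a single displayed chain of inequalities running from $\overline{Bd}_{u;{m_{n}}}(A\cup B)$ down to $\overline{Bd}_{u;{m_{n}}}(A)+\overline{Bd}_{u;{m_{n}}}(B)$.
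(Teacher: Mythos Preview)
Your proof is correct and is essentially identical to the paper's own argument: the paper also starts from $|(A\cup B)\cap[n+1,n+m_s]|\le |A\cap[n+1,n+m_s]|+|B\cap[n+1,n+m_s]|$ and then takes the limit superior first in $n$ and then in $s$. Your additional remarks about the subadditivity of $\limsup$ and the extended-real arithmetic are exactly the implicit justifications the paper is using.
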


\begin{proof}
Since for each $n\in {\mathbb N}$ we have $| (A\cup B) \cap  [n+1,n+m_{s}]|=| (A\cap  [n+1,n+m_{s}]) \cup (B\cap  [n+1,n+m_{s}])| \leq |A\cap  [n+1,n+m_{s}]| +|B\cap  [n+1,n+m_{s}]| ,$
the required inequality follows by taking first the limit superior as $n\rightarrow \infty$ and after that the limit superior as $s\rightarrow \infty.$
\end{proof}

In a similar fashion, we can deduce a great number of other inequalities for the introduced Banach densities. For example, if $A,\ B\subseteq {\mathbb N}$ and $(m_{n})$ is an increasing sequence  in $[1,\infty),$ then we have  
$$
\underline{Bd}_{l;{m_{n}}}(A \Delta B)+ \underline{Bd}_{l;{m_{n}}}(A \cap B)\leq \underline{Bd}_{l;{m_{n}}}(A\cup B),
$$
where $A \Delta B$ denotes the symmetric difference of sets $A$ and $B.$

\begin{prop}\label{profica}
Suppose that $\lim_{n \rightarrow \infty}\frac{m_{n}}{n}=k\in [0,\infty).$ Then we have $
\overline{Bd}_{l;{m_{n}}}(A)=
\overline{Bd}_{u;{m_{n}}}(A)=
k\overline{Bd}(A)$ and $
\underline{Bd}_{l;{m_{n}}}(A)=
\underline{Bd}_{u;{m_{n}}}(A)=
k\underline{Bd}(A).$
\end{prop}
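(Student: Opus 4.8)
The plan is to factor out the ratio $m_{s}/s$ and reduce everything to the classical lower and upper Banach densities of $A$, exploiting that $A$ consists of integers. First I would note that, since $A\subseteq {\mathbb N}$, for all $n,s\in {\mathbb N}$ one has $|A\cap[n+1,n+m_{s}]|=|A\cap[n+1,n+\lfloor m_{s}\rfloor]|$, so upon setting $g(j):=\limsup_{n\to\infty}|A\cap[n+1,n+j]|$ and $h(j):=\liminf_{n\to\infty}|A\cap[n+1,n+j]|$ for $j\in {\mathbb N}$, we obtain for every fixed $s\in {\mathbb N}$ that $\limsup_{n\to\infty}\frac{|A\cap[n+1,n+m_{s}]|}{s}=\frac{g(\lfloor m_{s}\rfloor)}{s}$ and $\liminf_{n\to\infty}\frac{|A\cap[n+1,n+m_{s}]|}{s}=\frac{h(\lfloor m_{s}\rfloor)}{s}$. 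By the very definition of Banach density one has $\lim_{j\to\infty}g(j)/j=\overline{Bd}(A)$ and $\lim_{j\to\infty}h(j)/j=\underline{Bd}(A)$ (both limits taken over positive integers $j$, their existence being the usual Fekete-type fact since $g$ is subadditive and $h$ is superadditive; cf.\ \cite{grekos}). I would also record the trivial bound $0\le h(j)\le g(j)\le j$.

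Next I would treat the case $k>0$. Since $m_{s}/s\to k>0$ and $s\to\infty$, we get $m_{s}\to\infty$, hence $\lfloor m_{s}\rfloor\to\infty$; and from $\frac{m_{s}}{s}-\frac1s\le\frac{\lfloor m_{s}\rfloor}{s}\le\frac{m_{s}}{s}$ it follows that $\lfloor m_{s}\rfloor/s\to k$. Then writing $\frac{g(\lfloor m_{s}\rfloor)}{s}=\frac{g(\lfloor m_{s}\rfloor)}{\lfloor m_{s}\rfloor}\cdot\frac{\lfloor m_{s}\rfloor}{s}$ and letting $s\to\infty$, the first factor tends to $\overline{Bd}(A)$ and the second to $k$, so $\frac{g(\lfloor m_{s}\rfloor)}{s}\to k\overline{Bd}(A)$; as this is a genuine limit, its $\liminf_{s\to\infty}$ and $\limsup_{s\to\infty}$ both equal $k\overline{Bd}(A)$, which is precisely $\overline{Bd}_{l;{m_{n}}}(A)=\overline{Bd}_{u;{m_{n}}}(A)=k\overline{Bd}(A)$. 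Running the identical argument with $h$ in place of $g$ gives $\underline{Bd}_{l;{m_{n}}}(A)=\underline{Bd}_{u;{m_{n}}}(A)=k\underline{Bd}(A)$.

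It remains to handle $k=0$, in which case the target values $k\overline{Bd}(A)$ and $k\underline{Bd}(A)$ are both $0$. Here I would simply use $0\le\frac{h(\lfloor m_{s}\rfloor)}{s}\le\frac{g(\lfloor m_{s}\rfloor)}{s}\le\frac{\lfloor m_{s}\rfloor}{s}\le\frac{m_{s}}{s}\to 0$, so that $\frac{g(\lfloor m_{s}\rfloor)}{s}$ and $\frac{h(\lfloor m_{s}\rfloor)}{s}$ are null sequences; taking $\liminf_{s\to\infty}$ and $\limsup_{s\to\infty}$ of them shows that all four quantities $\overline{Bd}_{l;{m_{n}}}(A),\ \overline{Bd}_{u;{m_{n}}}(A),\ \underline{Bd}_{l;{m_{n}}}(A),\ \underline{Bd}_{u;{m_{n}}}(A)$ equal $0$, as required.

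The argument is essentially bookkeeping with $\liminf/\limsup$; the only point that requires a moment's care — the "main obstacle", such as it is — is the case split at $k=0$: when $k>0$ one genuinely needs $\lfloor m_{s}\rfloor\to\infty$ in order to invoke $g(j)/j\to\overline{Bd}(A)$ and $h(j)/j\to\underline{Bd}(A)$, whereas when $k=0$ the sequence $\lfloor m_{s}\rfloor$ need not tend to infinity (it may even be bounded), and one must instead rely on the crude estimate $g(j)\le j$.
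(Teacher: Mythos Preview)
Your argument is correct and rests on the same key input as the paper's proof, namely that the limits $\lim_{j\to\infty}g(j)/j=\overline{Bd}(A)$ and $\lim_{j\to\infty}h(j)/j=\underline{Bd}(A)$ genuinely exist (the Fekete/subadditivity fact, as in \cite{grekos}). The execution differs slightly: the paper sandwiches $m_{s}$ between $c_{-}s$ and $c_{+}s$ for arbitrary $c_{-}<k<c_{+}$, substitutes $s\mapsto c_{\pm}s$, applies \cite[Theorem 1.3, Proposition 3.1]{grekos}, and then lets $c_{\pm}\to k$; you instead replace $m_{s}$ by $\lfloor m_{s}\rfloor$ at the outset and factor $\frac{g(\lfloor m_{s}\rfloor)}{s}=\frac{g(\lfloor m_{s}\rfloor)}{\lfloor m_{s}\rfloor}\cdot\frac{\lfloor m_{s}\rfloor}{s}$, which is a bit more direct and avoids the squeeze. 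You also spell out the case $k=0$ (via the crude bound $g(j)\le j$), which the paper leaves to the reader.
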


\begin{proof}
We will prove the statement of proposition only in the case that $k>0.$
For any $c_{-}\in (0,k)$ and $c_{+}\in (k,\infty),$ we have the existence of a positive integer $l\in {\mathbb N}$ such that $c_{-}n\leq m_{n}\leq c_{+}n,$ $n\geq l.$ Hence,
$$
\frac{\bigl| A \cap [n+1,n+m_{s}]\bigr|}{s}\leq c_{+}\frac{\bigl|  A \cap [n+1,n+c_{+}s]\bigr|}{c_{+}s},\quad s \geq l
$$ 
and
$$
\frac{\bigl|  A \cap [n+1,n+m_{s}]\bigr|}{s}\geq c_{-}\frac{\bigl| A \cap [n+1,n+c_{-}s]\bigr|}{c_{-}s},\quad s\geq l.
$$ 
Taking into account \cite[Theorem 1.3]{grekos} and elementary substitutions $s\mapsto c_{\pm} s,$ the above implies
\begin{align*}
\limsup_{s\rightarrow \infty}&  \limsup_{n\rightarrow \infty}\frac{\bigl| A \cap [n+1,n+m_{s}]\bigr|}{s}\leq c_{+}\limsup_{s\rightarrow \infty} \limsup_{n\rightarrow \infty}\frac{\bigl|  A \cap [n+1,n+c_{+}s]\bigr|}{c_{+}s}
\\ & =c_{+}\limsup_{s\rightarrow \infty} \limsup_{n\rightarrow \infty}\frac{\bigl|  A \cap [n+1,n+s]\bigr|}{s}=c_{+}\overline{Bd}(A)
\end{align*}
and
\begin{align*}
\liminf_{s\rightarrow \infty}& \limsup_{n\rightarrow \infty}\frac{\bigl|  A \cap [n+1,n+m_{s}]\bigr|}{s}\geq c_{-}\liminf_{s\rightarrow \infty} \limsup_{n\rightarrow \infty}\frac{\bigl|  A \cap [n+1,n+c_{-}s]\bigr|}{c_{-}s}
\\ & =c_{-}\liminf_{s\rightarrow \infty} \limsup_{n\rightarrow \infty}\frac{\bigl|  A \cap [n+1,n+s]\bigr|}{s}=c_{-}\overline{Bd}(A),
\end{align*}
so that the equalities $
\overline{Bd}_{l;{m_{n}}}(A)=
\overline{Bd}_{u;{m_{n}}}(A)=
k\overline{Bd}(A)$ follows by letting $c_{\pm} \rightarrow k.$ The equalities $
\underline{Bd}_{l;{m_{n}}}(A)=
\underline{Bd}_{u;{m_{n}}}(A)=
k\underline{Bd}(A)$ can be proved similarly, by appealing to \cite[Proposition 3.1]{grekos} in place of \cite[Theorem 1.3]{grekos}. 
\end{proof}

Combining Proposition \ref{mile} and Proposition \ref{profica}, we get that the existence of $\lim_{n \rightarrow \infty}\frac{m_{n}}{n}$ in $[0,\infty]$ implies $
\underline{Bd}_{l;{m_{n}}}(A)=
\underline{Bd}_{u;{m_{n}}}(A)$ for any $A\subseteq {\mathbb N}.$ In the case that $\lim_{n \rightarrow \infty}\frac{m_{n}}{n}$ does not exist in $[0,\infty],$ we have $\liminf_{n \rightarrow \infty}\frac{m_{n}}{n} <\limsup_{n \rightarrow \infty}\frac{m_{n}}{n}$ and therefore
$$
\underline{d}_{{m_{n}}}({\mathbb N})=\liminf_{s \rightarrow \infty}\frac{m_{s}}{s} =\underline{Bd}_{l;{m_{n}}}({\mathbb N})<\underline{Bd}_{u;{m_{n}}}({\mathbb N})=\limsup_{s\rightarrow \infty}\frac{m_{s}}{s}=\overline{d}_{{m_{n}}}({\mathbb N}).
$$

By the foregoing, we have the following corollary:

\begin{cor}\label{zar}
Suppose that $(m_{n})$ is an increasing sequence  in $[1,\infty).$ Then the following statements are equivalent:
\begin{itemize}
\item[(i)] 
$\underline{Bd}_{l;{m_{n}}}(A)=
\underline{Bd}_{u;{m_{n}}}(A)$ for all subsets $A$ of ${\mathbb N}.$
\item[(ii)] $\underline{Bd}_{l;{m_{n}}}({\mathbb N})=
\underline{Bd}_{u;{m_{n}}}({\mathbb N}).$
\item[(iii)] The limit $\lim_{n \rightarrow \infty}\frac{m_{n}}{n}$ exists in $[0,\infty].$
\end{itemize}
\end{cor}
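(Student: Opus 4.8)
The plan is to prove the cycle of implications $(i)\Rightarrow(ii)\Rightarrow(iii)\Rightarrow(i)$, essentially assembling the facts already recorded in the paragraphs preceding the statement. The implication $(i)\Rightarrow(ii)$ is trivial: just specialize to $A={\mathbb N}$.

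For $(ii)\Rightarrow(iii)$ I would compute the two Banach densities of ${\mathbb N}$ explicitly. Since every $n\in{\mathbb N}$ is an integer, the interval $[n+1,n+m_{s}]$ contains precisely the integers $n+1,\dots,n+\lfloor m_{s}\rfloor$, so $|{\mathbb N}\cap[n+1,n+m_{s}]|=\lfloor m_{s}\rfloor$ independently of $n$. Consequently $\liminf_{n\to\infty}\frac{|{\mathbb N}\cap[n+1,n+m_{s}]|}{s}=\frac{\lfloor m_{s}\rfloor}{s}$, and since $\bigl|\frac{\lfloor m_{s}\rfloor}{s}-\frac{m_{s}}{s}\bigr|\le\frac{1}{s}\to0$, taking $\liminf$, resp. $\limsup$, as $s\to\infty$ gives $\underline{Bd}_{l;{m_{n}}}({\mathbb N})=\liminf_{s\to\infty}\frac{m_{s}}{s}$ and $\underline{Bd}_{u;{m_{n}}}({\mathbb N})=\limsup_{s\to\infty}\frac{m_{s}}{s}$ (essentially the computation displayed right before the corollary). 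Hence $(ii)$ says exactly that $\liminf_{s\to\infty}\frac{m_{s}}{s}=\limsup_{s\to\infty}\frac{m_{s}}{s}$, i.e. that $\lim_{n\to\infty}\frac{m_{n}}{n}$ exists in $[0,\infty]$, which is $(iii)$.

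For $(iii)\Rightarrow(i)$ I would split into two cases according to the value $k:=\lim_{n\to\infty}\frac{m_{n}}{n}\in[0,\infty]$. If $k\in[0,\infty)$, then Proposition \ref{profica} yields $\underline{Bd}_{l;{m_{n}}}(A)=\underline{Bd}_{u;{m_{n}}}(A)=k\,\underline{Bd}(A)$ for every $A\subseteq{\mathbb N}$; if $k=+\infty$, then Proposition \ref{mile} yields $\underline{Bd}_{l;{m_{n}}}(A)=\underline{Bd}_{u;{m_{n}}}(A)\in\{0,+\infty\}$ for every $A\subseteq{\mathbb N}$. In either case $(i)$ holds, and the cycle closes.

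There is no genuine difficulty in any of these steps; the only point demanding a moment's attention is the elementary remark used in $(ii)\Rightarrow(iii)$ that $|{\mathbb N}\cap[n+1,n+m_{s}]|$ does not depend on $n$ and that replacing $m_{s}$ by $\lfloor m_{s}\rfloor$ is harmless in the limit — both already implicit in Lemma \ref{zelje-zelje}(ii). Everything else is a direct citation of Propositions \ref{mile} and \ref{profica}.
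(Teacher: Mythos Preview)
Your proof is correct and matches the paper's argument essentially line for line: the paper establishes $(iii)\Rightarrow(i)$ by combining Proposition~\ref{mile} (for $k\in\{0,+\infty\}$) with Proposition~\ref{profica} (for $k\in[0,\infty)$), and handles the remaining equivalence via the explicit computation $\underline{Bd}_{l;{m_{n}}}({\mathbb N})=\liminf_{s}\frac{m_{s}}{s}$, $\underline{Bd}_{u;{m_{n}}}({\mathbb N})=\limsup_{s}\frac{m_{s}}{s}$, which is exactly your $(ii)\Rightarrow(iii)$ stated contrapositively. The only cosmetic difference is that the paper frames the second step as $\neg(iii)\Rightarrow\neg(ii)$ rather than $(ii)\Rightarrow(iii)$.
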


\section{${\mathcal F}$-hypercyclicity of linear operators on Fr\' echet spaces}\label{yunied}

Assume that $q\in [1,\infty)$ and $(m_{n})$ is an increasing sequence  in $[1,\infty).$ Consider the notion introduced in Definition \ref{4-skins-MLO-okay} with:
\begin{itemize}
\item[(i)] ${\mathcal F}=\{A \subseteq {\mathbb N} : \underline{d}(A)>0\},$     (ii) ${\mathcal F}=\{A \subseteq {\mathbb N} : \underline{d}_{q}(A)>0\},$
\item[(iii)] ${\mathcal F}=\{A \subseteq {\mathbb N} : \overline{d}(A)>0\},$     (iv) ${\mathcal F}=\{A \subseteq {\mathbb N} : \overline{d}_{q}(A)>0\},$
\item[(v)] ${\mathcal F}=\{A \subseteq {\mathbb N} : \underline{d}_{{m_{n}}}(A)>0\},$ (vi) ${\mathcal F}=\{A \subseteq {\mathbb N} : \overline{d}_{{m_{n}}}(A)>0\},$
\item[(vii)] ${\mathcal F}=\{A \subseteq {\mathbb N} : \underline{Bd}(A)>0\},$ (viii) ${\mathcal F}=\{A \subseteq {\mathbb N} : \overline{Bd}(A)>0\};$
\end{itemize}
then we say that $(T_{n})_{n\in {\mathbb N}}$ ($T,$ $x$) is frequently hypercyclic, $q$-frequently hypercyclic, upper frequently hypercyclic, upper $q$-frequently hypercyclic,
l-$(m_{n})$-hypercyclic, u-$(m_{n})$-hypercyclic, lower reiteratively hypercyclic and reiteratively hypercyclic, respectively. 

Due to \eqref{nejes}, we have that the frequent hypercyclicity implies $q$-hypercyclicity for any $q\geq 1.$ It is very simple to prove a great number of other, almost trivial, inequalities for lower and upper densities. For example, let $(m_{n}')$ be an increasing sequence of positive integers and $m_{n}\leq m_{n}'$ for all $n\in {\mathbb N}.$ Then 
$$
\underline{Bd}_{l;q}(A) \leq \underline{Bd}_{u;q}(A) \mbox{ and } \overline{Bd}_{l;q}(A) \leq \overline{Bd}_{u;q}(A),\quad q\geq 1;
$$
$$
\underline{Bd}_{l;m_{n}}(A) \leq \underline{Bd}_{u;m_{n}'}(A) \mbox{ and } \overline{Bd}_{l;m_{n}}(A) \leq \overline{Bd}_{u;m_{n}'}(A),\quad q\geq 1;
$$
$$
\underline{Bd}_{l;q_{1}}(A) \leq \underline{Bd}_{l;q_{2}}(A) \mbox{ and } \underline{Bd}_{u;q_{1}}(A) \leq \underline{Bd}_{u;q_{2}}(A),\quad 1\leq q_{1}\leq q_{2}<\infty;
$$
$$
\overline{Bd}_{l;q_{1}}(A) \leq \overline{Bd}_{l;q_{2}}(A) \mbox{ and } \overline{Bd}_{u;q_{1}}(A) \leq \overline{Bd}_{u;q_{2}}(A),\quad 1\leq q_{1}\leq q_{2}<\infty;
$$
$$
\underline{Bd}_{l;m_{n}}(A) \leq \underline{Bd}_{l;m_{n}'}(A) \mbox{ and } \underline{Bd}_{u;m_{n}}(A) \leq \underline{Bd}_{u;m_{n}'}(A);
$$
$$
\overline{Bd}_{l;m_{n}}(A) \leq \overline{Bd}_{l;m_{n}'}(A) \mbox{ and } \overline{Bd}_{u;m_{n}}(A) \leq \overline{Bd}_{u;m_{n}'}(A),
$$
which implies a great number of obvious consequences. For example, the lower $l;q_{1}$-reiterative hypercyclicity implies the lower $l;q_{2}$-reiterative hypercyclicity for $1\leq q_{1}\leq q_{2}<\infty .$

Now we would like to correct 
a small inconsistency made in \cite{kimpark1}, where the author has stated that the lower $(m_{k})$-density of set $A=\{ n_{1},\ n_{2}, \cdot \cdot \cdot, n_{k},\cdot \cdot \cdot \},$ where $(n_{k})$ is a strictly increasing sequence of positive integers, satisfies
$\underline{d}_{m_{k}}(A)>0$ iff there exists a finite constant $L_{1}>0$ 
such that $n_{k}\leq L_{1}m_{k},$ $k\in {\mathbb N}.$ This is not true in general and a simple counterexample is given by $n_{k}:=ke^{k}$ and $m_{k}:=e^{k}$ ($k\in {\mathbb N}$): then 
the lower $(m_{k})$-density of set $A$ is equal to $1$ but there is no finite constant  $L_{1}>0$ 
such that $n_{k}\leq L_{1}m_{k},$ $k\in {\mathbb N}.$ Therefore, we need to make a strict difference between the notion of l-$(m_{k})$-hypercyclicity and the notion of $(m_{k})$-hypercyclicity. Concerning this question, we have the following remark:

\begin{rem}\label{hashima}
It is very straightforward to see that the $(m_{k})$-hypercyclicity automatically implies l-$(m_{k})$-hypercyclicity, provided the validity of following condition:
$$
\mbox{For all } L>0,\mbox{ we have } \liminf_{k\rightarrow \infty}\frac{\sup\{ s\in {\mathbb N} : Lm_{s}\leq m_{k} \}}{k}>0.
$$
On the other hand, the l-$(m_{k})$-hypercyclicity implies $(m_{k})$-hypercyclicity, provided the validity of following condition:
\begin{align}\label{otkrio}
\mbox{For all }c>0,\mbox{ there exists }L>0\mbox{ such that } m_{\lceil kc \rceil} \leq Lm_{k}\mbox{ for all }k\in {\mathbb N};
\end{align}
to see this, it suffices to observe that 
the l-$(m_{k})$-hypercyclicity of a linear operator $T$ on $X,$ say, implies that for every open non-empty subset $U$ of $X$ there exists a positive constant $c>0$ such that
the interval $[1,m_{\lceil kc \rceil}]$ contains at least $k$ elements of set $A:=\{s \in {\mathbb N} : T^{s}x\in U\},$ where $x$ is an l-$(m_{k})$-hypercyclic vector for $T.$ Then \eqref{otkrio} yields that we can find a strictly increasing sequence $(n_{k})$ of positive integers, where $n_{k}$ is the $k$-th element of $A$ in increasing order, such that $n_{k}\leq m_{\lceil kc \rceil}\leq Lm_{k},$ $k\in {\mathbb N}.$
\end{rem}

For more details about examples of hypercyclic operators that are not frequently hypercyclic, frequently hypercyclic operators that are not upper frequently hypercyclic and similar problematic, the reader may consult \cite{puid-dios-erg} and references cited therein.

If $q\in [1,\infty),$ $(m_{n})$ is an increasing sequence  in $[1,\infty)$, and
\begin{itemize}
\item[(i)'] ${\mathcal F}=\{A \subseteq {\mathbb N} : \underline{Bd}_{l;q}(A)>0\},$ (ii)' ${\mathcal F}=\{A \subseteq {\mathbb N} : \underline{Bd}_{u;q}(A)>0\},$
\item[(iii)'] ${\mathcal F}=\{A \subseteq {\mathbb N} : \overline{Bd}_{l;q}(A)\},$ (iv)' ${\mathcal F}=\{A \subseteq {\mathbb N} : \overline{Bd}_{u;q}(A)\},$
\item[(v)'] ${\mathcal F}=\{A \subseteq {\mathbb N} : \underline{Bd}_{l;{m_{n}}}(A)>0\},$ (vi)' ${\mathcal F}=\{A \subseteq {\mathbb N} : \underline{Bd}_{u;{m_{n}}}(A)>0\},$
\item[(vii)'] ${\mathcal F}=\{A \subseteq {\mathbb N} : \overline{Bd}_{l;{m_{n}}}(A)>0\},$ (viii)' ${\mathcal F}=\{A \subseteq {\mathbb N} : \overline{Bd}_{u;{m_{n}}}(A)>0\},$
\end{itemize}
then we say that $(T_{n})_{n\in {\mathbb N}}$ ($T,$ $x$) is lower $l;q$-reiteratively hypercyclic, lower 
$u;q$-reiteratively hypercyclic, upper $l;q$-reiteratively hypercyclic, upper $u;q$-reiteratively hypercyclic, lower $l;m_{n}$-reiteratively hypercyclic, lower $u;m_{n}$-reiteratively hypercyclic, upper $l;m_{n}$-reiteratively hypercyclic and upper $u;m_{n}$-reiteratively hypercyclic, respectively
(see Definition \ref{4-skins-MLO-okay}). 

\begin{rem}\label{novi-remark}
Although we do not intend to analyze ${\mathcal F}$-transitive linear operators in separable Fr\' echet spaces, we would like to note that,
in any case (i)'-(viii)' set out above, the family $\tilde{{\mathcal F}}$ is not a  filter so that ${\mathcal F}$-Transitivity Criterion \cite[Theorem 2.4]{biba} is inapplicable (see \cite{biba} for the notion).
The families defined in (v) and (vi) are (proper) Furstenberg families iff $\liminf_{s\rightarrow \infty}\frac{m_{s}}{s}>0,$ while the families defined in (vii) and (viii) are (proper) Furstenberg families iff $\limsup_{s\rightarrow \infty}\frac{m_{s}}{s}>0.$ It is not clear whether the condition $\liminf_{s\rightarrow \infty}\frac{m_{s}}{s}>0$ ensures that the families defined in (iii)'-(iv)' and (vii)'-(viii)' are
upper Furstenberg; we will only note that any $\liminf_{s\rightarrow \infty}$, $\liminf_{n\rightarrow \infty},$ $\limsup_{s\rightarrow \infty}$ or $\limsup_{n\rightarrow \infty}$ in Definition \ref{guptinjo} can be replaced, optionally, with $\inf_{s\in {\mathbb N}},$  $\inf_{n\in {\mathbb N}},$  $\sup_{s\in {\mathbb N}}$ or $\sup_{n\in {\mathbb N}},$
respectively, which should be much better choice if we want to apply results from \cite{boni-upper}; see, especially, \cite[Example 12(b)]{boni-upper}. We will consider this question in more details somewhere else.
\end{rem}

\begin{rem}\label{novi-remarkk}
Due to Theorem \ref{mile-prim} and Corollary \ref{mile-duo}(ii), if $\lim_{n\rightarrow +\infty}\frac{m_{n}}{n}=+\infty$ and $q>1,$ then the concepts of l-$(m_{n})$-hypercyclicity, lower $l;m_{n}$-reiterative hypercyclicity and lower $u;m_{n}$-reiterative hypercyclicity
coincide, which continues to hold for the concepts of $q$-frequent hypercyclicity, lower $l;q$-reiterative hypercyclicity and lower 
$u;q$-reiterative hypercyclicity (let us recall that Bayart and Matheron have proved \cite[Theorem 1.2, Corollary 1.3]{Bay} that the condition $\lim_{n\rightarrow +\infty}\frac{m_{n}}{n}=+\infty$ enables one to construct a linear continuous operator on $l^{1}({\mathbb N})$ that is $(m_{n})$-hypercyclic but not weakly mixing, as well as that for any $q>1$ there exists a linear continuous operator on $l^{1}({\mathbb N})$ that is $q$-frequently hypercyclic but not weakly mixing). In the case that $q=1,$ then there is no separable Fr\' echet space supporting a lower $l;1$-reiteratively hypercyclic (lower $u;1$-reiteratively hypercyclic) linear continuous operator due to a recent result of B\'es, Menet, Peris  and Puig \cite{progres-regres}. This statement is easily transferred to linear not necessarily continuous operators in Fr\'echet spaces (see also Theorem \ref{debos} below).
\end{rem}

We continue by observing that, in spite of Proposition \ref{perhane}, an element $x\in X$ is a $q$-frequently hypercyclic vector for a continuous linear operator $T\in L(X)$ iff for every non-empty open set $U$ of $X$ there exist a strictly increasing sequence $(n_{k})$ of natural numbers and a finite constant $L>0$ such that $n_{k}\leq Lk^{q},$ $k\in {\mathbb N}$ and $T^{n_{k}}x\in U,$ $k\in {\mathbb N}$ ($q\geq 1$).
Here, it is worth observing that the $q$-Frequent Hypercyclic Criterion \cite[Theorem 2.5]{heo} continues to hold for any number 
$q\geq 1$ if we replace the terms $k ^{q},$ $ (k-n) ^{q}$ and $(k+n) ^{q}$ in the formulation of this theorem with terms $\lfloor k ^{q} \rfloor,$ $\lfloor (k-n) ^{q} \rfloor$ and $\lfloor (k+n) ^{q} \rfloor,$ respectively ($\lceil k ^{q} \rceil ,$ $\lceil (k-n) ^{q} \rceil$ and $\lceil (k+n) ^{q} \rceil ,$ respectively),
which can be proved with the help of Lemma \ref{zelje-zelje} and the corresponding result known in the case that $q\in {\mathbb N};$ similarly, we can reformulate the statement of
\cite[Theorem 3.2]{kimpark1} for any increasing sequence $(m_{k})$ in $[1,\infty).$ 
Speaking-matter-of-factly, the following extension of \cite[Theorem 3.2]{kimpark1}, holding for sequences of operators, can be proved by applying directly Frequent Universality Criterion \cite[Theorem 2.4]{boni-ERAT}. The theorem below also provides a slight generalization of Frequent Universality Criterion:

\begin{thm}\label{mk-FUC} (l-$(m_{k})$-Frequent Universality Criterion)
Suppose that $X$ is Fr\' echet space, $Y$ is a separable Fr\' echet space, $T_{n}\in L(X,Y)$ and $(m_{k})$ is an increasing sequence in $[1,\infty).$ 
Suppose that there is a dense subset $Y_{0}$ of $Y$ and mappings $S_{n} : Y_{0} \rightarrow X$ ($n\in {\mathbb N}$) such that
the following conditions hold for all $y\in Y_{0}$:
\begin{itemize} 
\item[(i)]The series $\sum_{n=1}^{k}T_{\lfloor m_{k}\rfloor}S_{\lfloor m_{k-n}\rfloor}y$ converges unconditionally, uniformly in $k\in {\mathbb N}.$
\item[(ii)] The series $\sum_{n=1}^{\infty}T_{\lfloor m_{k}\rfloor}S_{\lfloor m_{k+n} \rfloor}y$ converges unconditionally, uniformly in $k\in {\mathbb N}.$
\item[(iii)] The series $\sum_{n=1}^{\infty}S_{\lfloor m_{n} \rfloor}y$ converges unconditionally, uniformly in $n\in {\mathbb N}.$
\item[(iv)] $\lim_{n\rightarrow \infty}T_{\lfloor m_{n} \rfloor}S_{\lfloor m_{n} \rfloor}y=y,$ $y\in Y_{0}.$
\end{itemize}
Then the sequence of operators $(T_{n})_{n\in {\mathbb N}}$ is l-$(m_{k})$-frequently hypercyclic.
\end{thm}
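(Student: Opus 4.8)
The plan is to reduce the statement to the Frequent Universality Criterion \cite[Theorem 2.4]{boni-ERAT}, applied not to $(T_{n})_{n\in\mathbb N}$ itself but to the ``sampled'' sequence $\widetilde T_{k}:=T_{\lfloor m_{k}\rfloor}$ ($k\in\mathbb N$) together with the maps $\widetilde S_{k}:=S_{\lfloor m_{k}\rfloor}\colon Y_{0}\to X$. The first step is purely a matter of matching hypotheses: after this relabelling, conditions (i)--(iv) above are, almost verbatim, the four requirements that \cite[Theorem 2.4]{boni-ERAT} imposes on $(\widetilde T_{k})$ and $(\widetilde S_{k})$ --- unconditional, uniform-in-$k$ convergence of the ``past'' and ``future'' series $\sum_{n}\widetilde T_{k}\widetilde S_{k-n}y$ and $\sum_{n}\widetilde T_{k}\widetilde S_{k+n}y$, unconditional uniform convergence of $\sum_{n}\widetilde S_{n}y$, and $\widetilde T_{n}\widetilde S_{n}y\to y$ on the dense set $Y_{0}$. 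Since $X$ is Fr\'echet and $Y$ is a separable Fr\'echet space, that criterion produces a vector $x\in X$ which is frequently universal for $(\widetilde T_{k})_{k}$; that is, for every non-empty open $V\subseteq Y$ the set $B_{V}:=\{k\in\mathbb N : T_{\lfloor m_{k}\rfloor}x\in V\}$ has $\underline d(B_{V})>0$.

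The second step is to promote this to l-$(m_{k})$-frequent hypercyclicity of $(T_{n})$, i.e.\ to $\underline d_{m_{k}}\bigl(S(x,V)\bigr)>0$ for every non-empty open $V\subseteq Y$. Fix such a $V$, put $A:=S(x,V)$ and $c:=\underline d(B_{V})>0$. Since $(m_{k})$ is increasing, $k\le n$ forces $\lfloor m_{k}\rfloor\le\lfloor m_{n}\rfloor\le m_{n}$, and each $k\in B_{V}$ satisfies $\lfloor m_{k}\rfloor\in A$; hence $\{\lfloor m_{k}\rfloor : k\in B_{V},\ k\le n\}\subseteq A\cap[1,m_{n}]$. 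If $P$ bounds the multiplicity of the non-decreasing map $k\mapsto\lfloor m_{k}\rfloor$ (each integer attained at most $P$ times --- for $m_{k}=k^{q}$, $q\ge1$, this holds eventually with $P=1$, since $k\mapsto\lfloor k^{q}\rfloor$ is strictly increasing), then $|A\cap[1,m_{n}]|\ge P^{-1}|B_{V}\cap[1,n]|$, and taking $\liminf_{n\to\infty}$ gives $\underline d_{m_{k}}(A)\ge c/P>0$. As the $T_{n}$ are everywhere-defined continuous operators, the domain condition of Definition \ref{4-skins-MLO-okay} is automatic, which finishes the proof.

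The step I expect to be the real obstacle is the translation in the second paragraph. The implication ``frequently universal for $(T_{\lfloor m_{k}\rfloor})_{k}$'' $\Rightarrow$ ``l-$(m_{k})$-frequently hypercyclic for $(T_{n})$'' is not valid for arbitrary increasing $(m_{k})$: if $k\mapsto\lfloor m_{k}\rfloor$ has unbounded multiplicity one can have $\underline d_{m_{k}}(S(x,V))=0$ even though the sampled orbit meets $V$ along a set of positive lower density, so some regularity of $(m_{k})$ must be used (finite multiplicity of $k\mapsto\lfloor m_{k}\rfloor$ suffices and is automatic for $m_{k}=k^{q}$; note also that by Theorem \ref{mile-prim} the conclusion is vacuous unless $\liminf_{s}m_{s}/s>0$). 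A robust alternative that sidesteps this is to re-run the proof of \cite[Theorem 2.4]{boni-ERAT} verbatim and read the positive lower $(m_{k})$-density of $S(x,V)$ directly off its combinatorial skeleton --- the pairwise disjoint index sets of positive lower density, with prescribed minima and gap lengths, along which $x=\sum S_{\lfloor m_{k}\rfloor}y_{p}$ is assembled. A minor preliminary point is to make sure the cited criterion is on record in the ``sequence of operators'' form $T_{n}\in L(X,Y)$, not just for the powers $T^{n}$ of a single operator, and that its notion of unconditional uniform convergence coincides with the one used in (i)--(iv).
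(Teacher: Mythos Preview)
Your approach is exactly the paper's: apply the Frequent Universality Criterion \cite[Theorem~2.4]{boni-ERAT} to the sampled sequences $\widetilde T_{k}=T_{\lfloor m_{k}\rfloor}$ and $\widetilde S_{k}=S_{\lfloor m_{k}\rfloor}$, and then use monotonicity of $(m_{k})$ to convert positive lower density of $\{k:\widetilde T_{k}x\in V\}$ into positive lower $(m_{k})$-density of $S(x,V)$. The multiplicity concern you flag is not addressed in the paper either --- it simply observes $\{m_{n_{i}}:1\le i\le\lfloor k/L\rfloor\}\subseteq A\cap[1,m_{k}]$ and concludes $\underline d_{m_{k}}(A)\ge 1/L$ without checking that the $m_{n_{i}}$ are distinct --- so your version is, if anything, more careful than the original.
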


\begin{proof}
Without loss of generality, we may assume that $m_{k}\in {\mathbb N},$ $k\in {\mathbb N}.$
Set ${\mathcal T}_{n}:=T_{m_{n}}$ and ${\mathcal S}_{n}:=S_{m_{n}}$ ($n\in {\mathbb N}$). Then the assumptions (i)-(iv) yield that we can apply Frequent Universality Criterion to ${\mathcal T}_{n}$ and ${\mathcal S}_{n}$ ($n\in {\mathbb N}$), so that ${\mathcal T}_{n}$ 
is frequently universal, i.e., there exists an element $x\in X$ satisfying that, for every open non-empty subset $U$ of $X,$ we can find a strictly increasing sequence $(n_{k})$ of positive integers and a finite constant $L\geq 1$ such that $n_{k}\leq Lk,$ $k\in {\mathbb N}$ and $T^{m_{n_{k}}}x\in U,$ $k\in {\mathbb N}.$
Since $m_{n_{k_{0}}}\leq m_{Lk_{0}}\leq m_{k}$ for $1\leq k_{0}\leq \lfloor k/L\rfloor,$ we have that for this set $U$ and each $k\in {\mathbb N}$ we have $\{m_{n_{i}} : 1\leq i\leq \lfloor k/L\rfloor\} \subseteq [1,m_{k}]$ and $T_{m_{n_{i}}}x\in U$ so that the lower $(m_{k})$-density of set $\{ n\in {\mathbb N} : T_{n}x \in U \}$ is greater than or equal to $\lfloor 1/L \rfloor .$
This completes the proof.
\end{proof}

In a series of our researches carried out so far, we have investigated dynamical properties of unbounded linear operators following the ideas and methods initiated in \cite{cycch} (see \cite{knjigah}-\cite{knjigaho} for similar results). The subsequent result
can be deduced by applying l-$(m_{k})$-Frequent Universality Criterion to the sequence $T_{n}\in L([R(C)]_{\Theta},X)$ defined by $T_{n}Cx:=T^{n}Cx,$ $n\in {\mathbb N},$ $x\in X:$ 

\begin{thm}\label{mk-FUC} (l-$(m_{k})$-Frequent Universality Criterion for Unbounded Linear Operators)
Suppose that $X$ is a separable Fr\' echet space, $T$ is a linear operator acting on $X$, and $(m_{k})$ is an increasing sequence in $[1,\infty).$ Let $C\in L(X)$ be injective.
Suppose that there is a dense subset $X_{0}$ of $X$ and mappings $S_{n} : X_{0} \rightarrow R(C)$ ($n\in {\mathbb N}$) such that
the following conditions hold for all $x\in X_{0}$:
\begin{itemize} 
\item[(i)]The series $\sum_{n=1}^{k}T^{\lfloor m_{k}\rfloor} S_{\lfloor m_{k-n}\rfloor}x$ converges unconditionally, uniformly in $k\in {\mathbb N}.$
\item[(ii)] The series $\sum_{n=1}^{\infty}T^{\lfloor m_{k}\rfloor} S_{\lfloor m_{k+n} \rfloor}x$ converges unconditionally, uniformly in $k\in {\mathbb N}.$
\item[(iii)] The series $\sum_{n=1}^{\infty}S_{\lfloor m_{n} \rfloor}x$ converges unconditionally, uniformly in $n\in {\mathbb N}.$
\item[(iv)] $\lim_{n\rightarrow \infty}T^{\lfloor m_{n} \rfloor} S_{\lfloor m_{n} \rfloor}x=x,$ $x\in X_{0}.$
\item[(v)] $R(C) \subseteq D_{\infty}(T)$ and $T^{n}C\in L(X),$ $n\in {\mathbb N}.$ 
\end{itemize}
Then the operator $T$ is l-$(m_{k})$-frequently hypercyclic.
\end{thm}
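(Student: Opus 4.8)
The plan is to deduce this theorem from the sequence-operator version of the l-$(m_{k})$-Frequent Universality Criterion proved just above, applied to a suitable sequence of \emph{continuous} operators between the Fr\'echet spaces $[R(C)]_{\Theta}$ and $X$; this is the route indicated in the paragraph preceding the statement. As a harmless preliminary reduction, I would invoke Lemma \ref{zelje-zelje}(i), which gives $\underline{d}_{m_{n}}(A)=\underline{d}_{\lfloor m_{n}\rfloor}(A)$ for all $A\subseteq {\mathbb N}$; replacing $(m_{k})$ by $(\lfloor m_{k}\rfloor)$ changes neither the hypotheses nor the conclusion, so we may assume $m_{k}\in {\mathbb N}$ for every $k$ and drop all floor functions. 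Recall that $[R(C)]_{\Theta}$ is $R(C)$ endowed with the Fr\'echet topology generated by the seminorms $y\mapsto p_{n}(C^{-1}y)$ ($n\in {\mathbb N}$); since $p_{n}(C^{-1}Cx)=p_{n}(x)$, the map $C$ is a topological isomorphism of $X$ onto $[R(C)]_{\Theta}$, so $[R(C)]_{\Theta}$ is a separable Fr\'echet space and $C^{-1}\in L([R(C)]_{\Theta},X)$.

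First I would introduce, for each $n\in {\mathbb N}$, the operator $T_{n}\colon [R(C)]_{\Theta}\to X$ by $T_{n}(Cx):=T^{n}Cx$, $x\in X$; this is well defined because $C$ is injective, and meaningful because $R(C)\subseteq D_{\infty}(T)$ by hypothesis (v). The crucial observation is continuity: writing $T_{n}=(T^{n}C)\circ C^{-1}$ and using $C^{-1}\in L([R(C)]_{\Theta},X)$ together with $T^{n}C\in L(X)$ (hypothesis (v) again), one gets $T_{n}\in L([R(C)]_{\Theta},X)$. I expect this to be the only genuinely non-mechanical step: it is precisely the $p_{n}(C^{-1}\,\cdot\,)$-topology on $R(C)$ that turns each $T^{n}|_{R(C)}$ into a continuous operator, and this is where the regularizing operator $C$ and condition (v) enter.

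Next I would apply the sequence-operator l-$(m_{k})$-Frequent Universality Criterion with ``$X$'' taken to be $[R(C)]_{\Theta}$, ``$Y$'' taken to be $X$ (separable by hypothesis), the operators $(T_{n})_{n\in {\mathbb N}}$ just constructed, the dense set $Y_{0}:=X_{0}\subseteq X$, and the maps $S_{n}\colon X_{0}\to R(C)=[R(C)]_{\Theta}$ from the statement. Since $S_{\lfloor m_{j}\rfloor}x\in R(C)$, we have $T_{\lfloor m_{k}\rfloor}\bigl(S_{\lfloor m_{j}\rfloor}x\bigr)=T^{\lfloor m_{k}\rfloor}S_{\lfloor m_{j}\rfloor}x$, so the hypotheses (i)--(iv) of the present theorem are word-for-word the hypotheses (i)--(iv) of that criterion under this identification, the unconditional convergence in (iii) being read (as it must) in $[R(C)]_{\Theta}$. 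The criterion therefore produces an element $\xi\in\bigcap_{n\in {\mathbb N}}D(T_{n})=R(C)$ such that $\{n\in {\mathbb N}\colon T_{n}\xi\in U\}$ has positive lower $(m_{k})$-density for every open non-empty $U\subseteq X$.

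Finally I would translate back. Since $\xi\in R(C)\subseteq D_{\infty}(T)$ and $T_{n}\xi=T^{n}\xi$ for all $n$, the set $\{n\in {\mathbb N}\colon T^{n}\xi\in U\}$ has positive lower $(m_{k})$-density for every open non-empty $U\subseteq X$, which by Definition \ref{4-skins-MLO-okay}(ii) (with ${\mathcal F}=\{A\subseteq {\mathbb N}\colon\underline{d}_{m_{n}}(A)>0\}$) says exactly that $\xi$ is an l-$(m_{k})$-frequently hypercyclic vector of $T$; hence $T$ is l-$(m_{k})$-frequently hypercyclic. In short, the only real obstacle is establishing $T_{n}\in L([R(C)]_{\Theta},X)$ (this is the sole use of (v)); after that the argument is bookkeeping — matching indices and floor functions and noting that the universal vector of $(T_{n})$ automatically lives in $D_{\infty}(T)$, so that its $T$-orbit is literally $(T_{n}\xi)_{n\in {\mathbb N}}$.
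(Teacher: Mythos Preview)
Your proposal is correct and follows exactly the route the paper indicates: the paper's ``proof'' consists solely of the sentence preceding the statement, namely that the result follows by applying the sequence-operator l-$(m_{k})$-Frequent Universality Criterion to $T_{n}\in L([R(C)]_{\Theta},X)$ defined by $T_{n}Cx:=T^{n}Cx$, and you have fleshed this out faithfully (including the continuity argument via $T_{n}=(T^{n}C)\circ C^{-1}$ and the integer-$m_{k}$ reduction). Your parenthetical remark that the unconditional convergence in (iii) must be read in $[R(C)]_{\Theta}$ is a genuine point the paper leaves implicit.
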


In the following theorem, we state
a slight extension of Upper Frequent Hypercyclicity Criterion \cite[Corollary 24]{boni-upper} for unbounded linear operators (a straightforward proof is omitted):

\begin{thm}\label{C-unbounded} (Upper Frequent Hypercyclicity Criterion for Unbounded Linear Operators)
Suppose that $X$ is a separable Fr\' echet space, $T$ is a linear operator acting on $X,$ ${\mathcal F}$ is an upper Furstenberg family and $C\in L(X)$ is injective.
Suppose that there are two dense subsets $X_{0}$ and $Y_{0}$ of $X$ and mappings $S_{n} : Y_{0} \rightarrow R(C)$ ($n\in {\mathbb N}$) such that
the following condition holds: For every $y\in Y_{0}$ and $\epsilon>0,$ there exist $A\in {\mathcal F}$ and $\delta \in D$ such that:
\begin{itemize} 
\item[(i)] For every $x\in X_{0},$ there exists $B\in {\mathcal F}_{\delta}$ such that $B\subseteq A$ and that, for every $n\in {\mathbb N},$ $\|T^{n}Cx\| <\epsilon .$  
\item[(ii)] The series $\sum_{n\in A}S_{n}y$ converges.
\item[(iii)] For every $m\in A,$ we have $\|T^{m}\sum_{n\in A}S_{n}y -y\|<\epsilon.$
\item[(iv)] $R(C) \subseteq D_{\infty}(T)$ and $T^{n}C\in L(X),$ $n\in {\mathbb N}.$ 
\end{itemize}
Then the operator $T$ is ${\mathcal F}$-hypercyclic and the set consisting of all ${\mathcal F}$-hypercyclic vectors of operator $T$ is residual in $[R(C)]_{\circledast}.$
\end{thm}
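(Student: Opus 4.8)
The plan is to reduce the statement to the known Upper Frequent Hypercyclicity Criterion for sequences of \emph{continuous} operators, transplanting the whole picture from the unbounded operator $T$ on $X$ to the $C$-regularized sequence acting on the Fr\'echet space $[R(C)]_{\circledast}$ — exactly the device already used above for the unbounded version of the l-$(m_{k})$-Frequent Universality Criterion. Concretely, I would first observe that, by assumption (iv), the formula $T_{n}Cx:=T^{n}Cx$ ($x\in X$, $n\in{\mathbb N}$) defines operators $T_{n}\in L([R(C)]_{\circledast},X)$: the map is well defined because $R(C)\subseteq D_{\infty}(T)$, and $T_{n}=(T^{n}C)\circ C^{-1}$ is continuous because $T^{n}C\in L(X)$ while $C^{-1}:[R(C)]_{\circledast}\to X$ is continuous by the very construction of $[R(C)]_{\circledast}$ (indeed $C$ is a topological isomorphism of $X$ onto $R(C)=[R(C)]_{\circledast}$). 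I would then record the elementary identifications underlying the reduction: $\bigcap_{n}D(T_{n})=R(C)$; for $x\in R(C)$ and any open non-empty $V\subseteq X$ one has $\{n:T_{n}x\in V\}=\{n:T^{n}x\in V\}$, so an ${\mathcal F}$-hypercyclic element of the sequence $(T_{n})$ (Definition \ref{4-skins-MLO-okay}) is nothing but an ${\mathcal F}$-hypercyclic vector of $T$ lying in $R(C)$; moreover $C(X_{0})$ is dense in $[R(C)]_{\circledast}$ (since $X_{0}$ is dense in $X$), and $[R(C)]_{\circledast}$ is itself separable, being a continuous image of the separable space $X$.

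With this dictionary in place, the second step is to check that hypotheses (i)--(iii), read for the sequence $(T_{n})$, are precisely the hypotheses of \cite[Corollary 24]{boni-upper} applied with the dense subset $C(X_{0})$ of $[R(C)]_{\circledast}$, the dense subset $Y_{0}$ of $X$, and the maps $S_{n}:Y_{0}\to R(C)=[R(C)]_{\circledast}$: given $y\in Y_{0}$ and $\epsilon>0$, choose $A\in{\mathcal F}$ and $\delta\in D$ as in the hypothesis; condition (i) (with the orbit bound understood along $n\in B$, as the statement intends) then says that along some $B\in{\mathcal F}_{\delta}$ with $B\subseteq A$ the orbit $(T_{n}Cx)_{n\in B}=(T^{n}Cx)_{n\in B}$ is $\epsilon$-small for every element $Cx$ of the dense set $C(X_{0})$; condition (ii) gives the convergence of $z:=\sum_{n\in A}S_{n}y$ in $[R(C)]_{\circledast}$; and, since $T_{m}\in L([R(C)]_{\circledast},X)$ commutes with this sum, condition (iii) reads $\|T_{m}z-y\|<\epsilon$ for all $m\in A$. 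Invoking the cited criterion — in its version for sequences of continuous operators between Fr\'echet spaces with separable range, which holds by the same proof, exactly as noted in the passage preceding Theorem \ref{mk-FUC} — yields that $(T_{n})$ is ${\mathcal F}$-hypercyclic with a residual set of ${\mathcal F}$-hypercyclic vectors in $[R(C)]_{\circledast}$. Transporting this conclusion back through the identifications of the first step shows that the set of ${\mathcal F}$-hypercyclic vectors of $T$ is residual in $[R(C)]_{\circledast}$, and, being non-empty, in particular gives that $(T^{n})_{n\in{\mathbb N}}$, hence $T$, is ${\mathcal F}$-hypercyclic.

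The genuinely routine verifications — continuity of the $T_{n}$, density of $C(X_{0})$, separability of $[R(C)]_{\circledast}$, the set-equalities $\{n:T_{n}x\in V\}=\{n:T^{n}x\in V\}$ — are immediate from the construction of $[R(C)]_{\circledast}$ recalled in the preliminaries. The one point that needs care, and the place where the argument could fail if one is cavalier, is the \emph{topology} in which the series of (ii)--(iii) must converge: for $T_{m}$ to pass through the infinite sum one needs $\sum_{n\in A}S_{n}y$ to converge in $[R(C)]_{\circledast}$ (equivalently, $\sum_{n\in A}C^{-1}S_{n}y$ to converge in $X$), not merely in $X$, so (ii) is to be read in the stronger calibration $(p_{C})_{p\in\circledast}$; and one must ensure the sequence-version of \cite[Corollary 24]{boni-upper} being cited carries the same (unconditional) convergence hypotheses as the original. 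Apart from that bookkeeping, nothing substantial remains.
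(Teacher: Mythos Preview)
Your approach is exactly what the paper intends: the paper omits the proof as ``straightforward'' and the reduction device you describe --- defining $T_{n}Cx:=T^{n}Cx$ to get $T_{n}\in L([R(C)]_{\circledast},X)$ and then invoking \cite[Corollary 24]{boni-upper} for the continuous sequence $(T_{n})$ --- is precisely the method the paper uses for the immediately preceding Theorem \ref{mk-FUC}. Your identification of the one delicate point (that the convergence in (ii) must be in $[R(C)]_{\circledast}$ so that $T_{m}$ passes through the sum) is apt and is the only thing worth flagging in an otherwise routine transfer.
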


Before proceeding further, we would like to note that the assertions of \cite[Theorem 5.1, Corollary 5.2]{boni} continue to hold for sequences of continuous linear operators acting between different pivot spaces as well as that the l-$(m_{k})$-Frequent Universality Criterion for Unbounded Linear Operators
can be very difficultly applied directly. In the case of usually considered frequent hypercyclicity, this criterion has been essentially applied in the proof of \cite[Theorem 9.22]{Grosse}. The main aim of following theorem is to reconsider the last mentioned result for densely defined linear operators with non-empty resolvent set by using an idea of Kalmes \cite{dokserv}:

\begin{thm}\label{debos}
Suppose that $X$ is a separable Fr\' echet space, $T$ is a densely defined linear operator acting on $X$ and $\rho(T)\neq \emptyset.$ 
Let $J$ be a non-empty family, and let $E_{j} : {\mathbb T}\equiv \{ z\in {\mathbb C} : |z|=1\} \rightarrow X$ be a continuous mapping, resp. $C^{2}$-mapping, with $TE_{j}(z)=zE_{j}(z),$ $z\in {\mathbb T}$ ($j\in J$), satisfying that
$$
X_{0}\equiv span\{E_{j}(z) :  z\in {\mathbb T},\ j\in J\}
$$  
is dense in $X$. If there exists a sequence of continuous linear operators $(D_{s})$ in $ L(X)$ such that $TD_{s}\subseteq D_{s}T,$ $T^{n}D_{s}\in L(X)$ for all $s,\ n\in {\mathbb N}$ 
and $\bigcup_{s\in {\mathbb N}}R(D_{s})$ is dense in $[D_{\infty}(T)],$ then the operator $T$ is topologically mixing and chaotic, resp. frequently hypercyclic.
\end{thm}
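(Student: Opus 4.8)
The plan is to reduce the statement to the already known criterion \cite[Theorem 9.22]{Grosse}, applied to the \emph{continuous} operator obtained by restricting $T$ to the Fr\'echet space $[D_{\infty}(T)]$, and then to transfer the conclusions back to $X$ along the canonical embedding. Write $Z:=[D_{\infty}(T)]$ and let $\tilde{T}$ be the restriction of $T$ to $Z$. Since $\rho(T)\neq\emptyset$, each power $T^{k}$ is closed, so $Z$ really is a Fr\'echet space with the seminorms $p_{k,n};$ moreover $T(D_{\infty}(T))\subseteq D_{\infty}(T)$ and $p_{k,n}(Tx)\leq p_{k+1,n}(x),$ whence $\tilde{T}\in L(Z).$ Because every $E_{j}(z)$ is an eigenvector of $T,$ we have $X_{0}\subseteq D_{\infty}(T),$ so the inclusion $\iota\colon Z\hookrightarrow X$ is a continuous linear injection with dense range. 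A routine check shows that topological mixing, chaos and frequent hypercyclicity all pass from $\tilde{T}$ on $Z$ to $T$ on $X$ along $\iota$: periodic points of $\tilde{T}$ are periodic points of $T$ and, being dense in $Z,$ remain dense in $X;$ for mixing and for frequent hypercyclicity one uses that for any non-empty open $U\subseteq X$ the set $\iota^{-1}(U)$ is non-empty and open in $Z$ and that $\{n: \tilde{T}^{n}z\in \iota^{-1}(U)\}=\{n: T^{n}(\iota z)\in U\}.$

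Next I would establish the decisive point: the linear span of the unimodular eigenvectors of $\tilde{T}$ is dense in $Z.$ The hypotheses on $(D_{s})$ are precisely tailored to this. Since $T^{n}D_{s}\in L(X)$ for every $n,$ an estimate of the form $p_{k,n}(D_{s}x)=\sum_{i=0}^{k}p_{n}(T^{i}D_{s}x)\leq C\,q(x)$ for a suitable continuous seminorm $q$ on $X$ shows that $D_{s}$ maps $X$ continuously into $Z;$ in particular $R(D_{s})\subseteq D_{\infty}(T).$ From $TD_{s}\subseteq D_{s}T$ we obtain $\tilde{T}\bigl(D_{s}E_{j}(z)\bigr)=D_{s}TE_{j}(z)=z\,D_{s}E_{j}(z),$ so each $D_{s}E_{j}(z)$ is a (possibly trivial) eigenvector of $\tilde{T}$ for the eigenvalue $z\in {\mathbb T}.$ Now fix $s$ and $y=D_{s}x\in R(D_{s});$ picking $x_{k}\in X_{0}$ with $x_{k}\to x$ in $X$ and using continuity of $D_{s}\colon X\to Z$ gives $D_{s}x_{k}\to D_{s}x$ in $Z$ with $D_{s}x_{k}\in\mathrm{span}\{D_{s}E_{j}(z): z\in {\mathbb T},\ j\in J\}.$ Hence $\bigcup_{s}R(D_{s})$ lies in the $Z$-closure of $\mathrm{span}\{D_{s}E_{j}(z): z\in {\mathbb T},\ j\in J,\ s\in {\mathbb N}\},$ and since $\bigcup_{s}R(D_{s})$ is dense in $Z$ by assumption, this span is dense in $Z.$ The same continuity of $D_{s}\colon X\to Z$ shows that $Z$ is separable, each $R(D_{s})$ being a continuous image of the separable space $X.$

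Finally, the eigenvector fields ${\mathbb T}\ni z\mapsto D_{s}E_{j}(z)\in Z$ are continuous, resp.\ of class $C^{2},$ being compositions of $E_{j}$ with the continuous linear map $D_{s}\colon X\to Z.$ Thus $\tilde{T}$ on the separable Fr\'echet space $Z$ satisfies the hypotheses of \cite[Theorem 9.22]{Grosse} with the family $\{z\mapsto D_{s}E_{j}(z)\}_{(j,s)\in J\times {\mathbb N}},$ so $\tilde{T}$ is topologically mixing and chaotic in the continuous case, resp.\ frequently hypercyclic in the $C^{2}$ case. Transferring along $\iota$ as in the first paragraph yields the assertion for $T$ on $X.$

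I expect the main obstacle to be the middle step: one must verify carefully that $D_{s}\colon X\to[D_{\infty}(T)]$ is continuous — this is exactly what the requirement $T^{n}D_{s}\in L(X)$ for all $n$ buys us — and that this genuinely upgrades the density of $X_{0}$ in $X$ to density of the eigenvector span in the strictly finer space $[D_{\infty}(T)].$ This interplay between the two topologies, rather than any of the transfer lemmas, is the heart of the argument borrowed from \cite{dokserv}.
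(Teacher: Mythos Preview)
Your argument is correct and follows essentially the same route as the paper's proof: restrict $T$ to the Fr\'echet space $[D_{\infty}(T)]$, verify that the eigenvector fields $D_{s}E_{j}$ satisfy the hypotheses of \cite[Theorem 9.22]{Grosse} there (using $T^{n}D_{s}\in L(X)$ to get $D_{s}\colon X\to[D_{\infty}(T)]$ continuous, hence separability and density of the eigenvector span), and transfer back along the inclusion. One minor difference worth noting: the paper invokes the abstract Mittag--Leffler theorem to establish density of $D_{\infty}(T)$ in $X$, whereas you obtain this more directly from the observation that $X_{0}\subseteq D_{\infty}(T)$ (each $E_{j}(z)$ being an eigenvector), which is a small but pleasant simplification.
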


\begin{proof}
Since $\rho(T)\neq \emptyset ,$ we have that the operator $T^{n}$ is closed and the space $[D(T^{n})]$ is a Fr\' echet one ($n\in {\mathbb N}$). Applying the abstract Mittag-Leffler theorem \cite[Theorem 6.3]{local}, we can deduce that $D_{\infty}(T)$ is dense in the initial space $X$ and any space $[D(T^{n})]$ ($n\in {\mathbb N}$); see also the proof of  \cite[Proposition 6.2]{local}.
Furthermore, if $\{x_{k} : k\in {\mathbb N} \}$ is a dense sequence in $X,$ then it can be easily seen with the help of our assumption $T^{n}D_{s}\in L(E)$ ($s,\ n\in {\mathbb N}$) and the density of 
$\bigcup_{s\in {\mathbb N}}R(D_{s})$ in $[D_{\infty}(T)]$ that $\{D_{s}x_{k} : s,\ k\in {\mathbb N}\}$ is a dense sequence in $[D_{\infty}(T)],$ so that $[D_{\infty}(T)]$ is separable. It is clear that the restriction of operator $T$ to $D_{\infty}(T),$ $T_{\infty}$ for short, is a continuous linear operator on $D_{\infty}(T),$
$T_{\infty}\in L([D_{\infty}(T)]).$ We have that
$T_{\infty}D_{s}E_{j}(z)=D_{s}T_{\infty}E_{j}(z)=zD_{s}E_{j}(z),$ $z\in {\mathbb T}$ ($s\in {\mathbb N}$, $j\in J$), as well as that $D_{s}E_{j} : {\mathbb T}\equiv \{ z\in {\mathbb C} : |z|=1\} \rightarrow D_{\infty}(T)$ is a continuous mapping, resp. $C^{2}$-mapping ($s\in {\mathbb N},$ $j\in J$) and
$$
X_{0}^{\infty}\equiv span\bigl\{D_{s}E_{j}(z) :  z\in {\mathbb T},\ s\in {\mathbb N},\ j\in J\bigr\}
$$  
is dense in $[D_{\infty}(T)].$ An application of \cite[Theorem 9.22]{Grosse} yields that the operator $T_{\infty}$ is topologically mixing and chaotic, resp. frequently hypercyclic, which clearly implies that the operator $T$ has the same properties since the inclusion mapping $\tau : D_{\infty}(T) \rightarrow X$ has dense range and $T \circ \tau=\tau \circ T_{\infty}.$
\end{proof}

\begin{rem}\label{bvcp}
Using the argumentation contained in the proof \cite[Theorem 3.13]{266}, it can be easily seen that Theorem \ref{debos} provides a proper extension of \cite[Theorem 9.22]{Grosse}
in Banach spaces.
\end{rem}

In \cite[Example 3.8-Example 3.10]{266}, we have investigated densely distributionally chaotic properties of unbounded differential operators in Banach spaces and the main purpose of following theorem is to show how we can prove the frequent hypercyclicity and chaoticity of these operators by utilizing Theorem \ref{debos}:

\begin{example}\label{kraj}
Let $T$ be any of the operators analyzed in \cite[Example 3.8-Example 3.10]{266}. Then $T$ is densely defined, $\rho(T)\neq \emptyset$ 
and there exist numbers $0< a <b <2\pi$ and infinitely differentiable mappings $F_{1} : \{ e^{it} : t\in (a,b)\} \rightarrow X$ and $F_{2}: \{ e^{it} : t\in (a,b)\} \rightarrow X$
such that $F_{1}(e^{it})\in N(e^{it}-T)$ and $F_{2}(e^{it})\in N(e^{it}-T)$ for all $t\in [a,b]
,$ as well as that, for $a<a'<b'<b,$ the linear span of set $\{F_{1}(z) : z=e^{it}\mbox{ for some }t\in (a',b')  \} \cup \{ F_{2}(z) : z=e^{it}\mbox{ for some }t\in (a',b') \} $ is dense in $E.$
Set $E_{i}(e^{it}):=(t-a)^{3}(t-b)^{3}F_{i}(e^{it})$ for $t\in [a,b]$ and $E_{i}(e^{it}):=0$ for $t\in [0,a] \cap [b,2\pi],$ $i=1,2.$ Then $E_{i}(\cdot)$ is twice continuously differentiable and the subspace
$X_{0}
$
is dense in $X$. Furthermore, the conditions (i)-(iii) from the formulation of \cite[Theorem 3.13]{266} hold with $C=I,$ the identity operator on $X,$ and the requirements of Theorem \ref{debos} are satisfied with the sequence of operators $D_{s}:=C_{b}(1/s),$ where $s\geq 2$ and the operator $C_{b}(1/s)$ has the same meaning as in the proof of \cite[Theorem 3.13]{266}.
Hence, the operator $T$ is frequently hypercyclic, chaotic and topologically mixing. 

For illustration,
suppose that $X:=L^{2}({\mathbb R}),$ $c>b/2>0,$ $\Omega:=\{ \lambda \in
{\mathbb C} : \Re \lambda<c-b/2\}$ and ${\mathcal T}_{c}u:=u^{\prime
\prime}+2bxu^{\prime}+cu$ is the bounded perturbation of the
one-dimensional Ornstein-Uhlenbeck operator\index{operator!Ornstein-Uhlenbeck} acting with domain
$D({\mathcal T}_{c}):=\{u\in L^{2}({\mathbb R})\ \cap \
W^{2,2}_{loc}({\mathbb R}) : {\mathcal T}_{c}u\in L^{2}({\mathbb
R})\};$ see \cite{conman} and \cite{metafune} for more details.
Then the operator ${\mathcal T}_{c}$ is frequently hypercyclic, chaotic and topologically mixing, which continue to hold for the
multi-dimensional Ornstein-Uhlenbeck operators from
\cite[Section 4]{conman}. If we suppose that $r>0,$ $\sigma>0,$ $\nu =\sigma/\sqrt{2},$ $\gamma=r/\mu -\mu,$ $s>1,$ $s\nu>1$ and $\tau \geq 0,$ the vector space
$$
Y^{s,\tau}:=\Biggl\{ u\in C((0,\infty)) : \lim \limits_{x\rightarrow 0}\frac{u(x)}{1+x^{-\tau}}=\lim \limits_{x\rightarrow \infty}\frac{u(x)}{1+x^{s}}=0  \Biggr\},
$$
equipped with the norm
$$
\|u\|_{s,\tau}:=\sup_{x>0}\Biggl|\frac{u(x)}{\bigl(1+x^{-\tau}\bigr)\bigl(1+x^{s}\bigr)}\Biggr|,\quad u\in Y^{s,\tau},
$$
is a separable Banach space. We can similarly prove that the Black-Scholes operator $T,$ defined by $T:=D_{\nu}^{2}+\gamma D_{\mu}-r$ and acting with its maximal domain in $Y^{s,\tau}$, where 
$D_{\mu}:=\nu xd/dx,$ is frequently hypercyclic, chaotic and topologically mixing (\cite{emamirad-goldstein}).
\end{example}

Further analyses of ${\mathcal F}$-hypercyclic operators for the families ${\mathcal F}$ taking the form (i)'-(viii)' fall out from the scope of this paper.

\end{document}